%
\documentclass[runningheads]{llncs}
\usepackage{bbm}
\usepackage{times}
\usepackage{verbatim}
\usepackage{mathtools}
\usepackage{graphicx}
\usepackage{color}
\usepackage{amsfonts}
%

\newcommand{\komp}      {{^\prime}}
\newcommand{\nega}      [1] {{#1}\komp}
\newcommand{\te}{{\mathbin{*\mkern-9mu \circ}}}
\newcommand{\ite}[1]{\mathbin{\rightarrow_{#1}}}
\newcommand{\g}                 [2] {{#1} \mathbin{\te} {#2}}
\newcommand{\res}               [3] {{#2}\mathbin{\ite{#1}}{#3}}
\newcommand{\kompM}[1]{^{{\prime^{\mkern-4.5mu^{_{#1}}}}}}
\newcommand{\ted}{\mathbin{\diamond}}
\newcommand{\negaM}      [2] {{#2}\kompM{#1}}

\newcommand{\lex}                                        {\overset{\leftarrow}{\times}}
\newcommand{\plexII}               {\overset{\Twoheadleftarrow}{\times}}
\newcommand{\plexI}               {\overset{\threeheadleftarrow}{\times}}
\newcommand{\PLPI}            [3] {{#1}_{{#2}}\plexI{{#3}}}
\newcommand{\PLPII}            [2] {{#1}\plexII{{#2}}}
\newcommand{\PLPIII}            [4] {{#1}_{{#2}_{#3}}\plexI{{#4}}}
\newcommand{\PLPIV}            [3] {{#1}_{#2}\plexII{{#3}}}
\newcommand{\Twoheadleftarrow}               {\leftarrow \mkern-12.25mu \leftharpoonup}
\newcommand{\threeheadleftarrow}               {\leftarrow \mkern-12.5mu \leftarrow}

\begin{document}
\title{Group-like Uninorms\thanks{The present scientific contribution was supported by the GINOP 2.3.2-15-2016-00022 grant
and the Higher Education Institutional Excellence Programme 20765-3/2018/FEKUTSTRAT of the Ministry of Human Capacities in Hungary.
}}
%
%
\author{S\'andor Jenei\orcidID{0000-0001-8664-4226}}
\authorrunning{S. Jenei}
%
\institute{University of P\'ecs, P\'ecs, Hungary
\email{jenei@ttk.pte.hu}\\
\url{http://jenei.ttk.pte.hu/home.html}
}
\maketitle              
\begin{abstract}
Uninorms play a prominent role both in the theory and the applications of Aggregations and Fuzzy Logic. In this paper the class of group-like uninorms is introduced and characterized.
Other, more algebraic terminologies for this class of algebras are odd involutive commutative residuated lattices over $[0,1]$ or odd involutive FL$_e$-algebras over $[0,1]$.
First, two variants of a general construction -- called partial-lexicographic product --  will be recalled from \cite{Jenei_Hahn}; these  construct odd involutive FL$_e$-algebras.
Then two particular ways of applying the partial-lexicographic product construction will be specified.
The first method constructs, starting from $\mathbb R$ (the additive group of the reals) and modifying it in some way by $\mathbb Z$'s (the additive group of the integers), what we call basic group-like uninorms,
whereas with the second method one can modify any group-like uninorm by a basic group-like uninorm to obtain another group-like uninorm.
All group-like uninorms obtained this way have finitely many idempotent elements.
On the other hand, we prove that given any group-like uninorm which has finitely many idempotent elements, it can be constructed by consecutive applications of the second construction (finitely many times) using only basic group-like uninorms as building blocks.
Hence any basic group-like uninorm can be built using the first method, and any group-like uninorm which has finitely many idempotent elements can be built using the second method from only basic group-like uninorms.
In this way a complete characterization for group-like uninorms which possess finitely many idempotent elements is given: ultimately, all such uninorms can be built from $\mathbb R$ and $\mathbb Z$.
This characterization provides, for potential applications in several fields of fuzzy theory or aggregation theory,  the whole spectrum of choice of those group-like uninorms which possess finitely many idempotent elements.

\keywords{Uninorms  \and Construction \and Characterization.}
\end{abstract}
%
%
 
\section{Introduction}
Aggregation operations are crucial in numerous pure and applied fields of mathematics.
Fuzzy Theory is another large field, involving both pure mathematics and an impressive range of applications.
Mathematical fuzzy logics have been introduced in \cite{hajekbook}, and the topic is a rapidly growing field ever since.
In all these fields (and the list in far from being exhaustive) a crucial role is played by t-norms, t-conorms, and uninorms \cite{kmpbook}.

Introduced in \cite{UniIntro}, a {\em uninorm} $U$,  is a function of type $[0,1]\times[0,1]\to[0,1]$, that is, binary operations over the closed real unit interval $[0,1]$, such that the following axioms are satisfied.
$$
\begin{array}{ll}
U(x,y)=U(y,x)							&  \mbox{\ \ \ \ (Symmetry)}\\
\mbox{If $y\leq z$ then $U(x,y)\leq U(x,z)$}	& \mbox{\ \ \ \ (Monotonicity)}\\
U(U(x,y),z)=U(x,U(y,z))						& \mbox{\ \ \ \ (Associativity)}\\
\mbox{There exists $t\in]0,1[$ such that $U(x,t)=x$}	& \mbox{\ \ \ \ (Unit Element)}\\
\end{array}
$$
Establishing the structure theory of the whole class of uninorms seems to be quite difficult. 
Several authors have characterized particular subclasses of them, see e.g., \cite{IdUni,FdB1997,D07,RatUni,FodStruct,MZ2018,MZ2018b,PM2014,SZXX}.
Uninorms are interesting not only for a structural description purpose, but also different generalizations of them play a central role in many studies, see  \cite{A07,D14} for example.
Group-like uninorms, to be introduced below, form a subclass of involutive uninorms, and involutive uninorms play the same role among uninorms as the \L ukasiewicz t-norm or in general the class of rotation-invariant t-norms \cite{rotinvsemigroups,JenRotTNORM,JenRotAnnTNORM,RotRevisited,RotTriple} do in the class of t-norms.
In this paper we shall give a complete characterization for the class of group-like uninorms which have finitely many idempotent elements.

To this end, first a few notions should follow here:
Residuation is a crucial property in Mathematical Fuzzy Logics, and in Substructural Logics, in general \cite{gjko,MetMon 2007}.
A uninorm is residuated if there exists a function $I_U$ of type $[0,1]\times[0,1]\to[0,1]$, that is, a binary operation on $[0,1]$, such that the following is satisfied: $U(x,y)\leq z$ if and only if $I_U(x,z)\geq y$.
Frequently one uses the infix notation for a uninorms, too, and writes $\g{x}{y}$ in stead of $U(x,y)$, and $\res{\te}{x}{y}$ instead of $I_T(x,y)$.
A generalization of residuated  t-norms and uninorms is the notion of  FL$_e$-algebras.
This generalization is done by replacing $[0,1]$ by an arbitrary lattice, possibly without top and bottom elements:
an {\em FL$_e$-algebra}\footnote{Other terminologies for FL$_e$-algebras are: pointed commutative residuated lattices or pointed commutative residuated lattice-ordered monoids.} 
is a structure $( X, \wedge,\vee, \te, \ite{\te}, t, f )$ such that 
$( X, \wedge,\vee )$ is a lattice, $( X,\leq, \te,t)$ is a commutative, 
residuated\footnote{That is, there exists a binary operation $\ite{\te}$ such that $\g{x}{y}\leq z$ if and only if $\res{\te}{x}{z}\geq y$; this equivalence is often called adjointness condition, ($\te,\ite{\te}$) is called an adjoint pair. Equivalently, for any $x,z$, the set $\{v\ | \ \g{x}{v}\leq z\}$ has its greatest element, and $\res{\te}{x}{z}$ is defined as this element: $\res{\te}{x}{z}:=\max\{v\ | \ \g{x}{v}\leq z\}$; this is often referred to as the residuation property.}
monoid
, and $f$ is an arbitrary constant.
One defines $\nega{x}=\res{\te}{x}{f}$ and calls an FL$_e$-algebra {\em involutive} if $\nega{(\nega{x})}=x$ holds.
Call an involutive FL$_e$-algebra {\em group-like} or {\em odd} if $t=f$ holds. 
Throughout the paper we shall simply write \lq\lq\ odd FL$_e$-algebra\rq\rq\ instead of \lq\lq odd involutive FL$_e$-algebra\rq\rq\ for short.
For a odd FL$_e$-algebra $\mathbf X$, let $gr(X)$ be the set of invertible elements of $\mathbf X$. It turns out that there is a subalgebra of $\mathbf X$ on $gr(X)$, denote it by $\mathbf X_\mathbf{gr}$ and call it the group part of $\mathbf X$. \\
Speaking in algebraic terms, t-norms and uninorms are the monoidal operations of commutative linearly ordered monoids over $[0,1]$.
Likewise, residuated t-norms and uninorms are just the monoidal operations of FL$_e$-algebras over $[0,1]$.
According to the terminology above, the class of involutive t-norms constitutes the \L ukasiewicz t-norm, and all rotation-invariant t-norms (aka. IMTL-algebras over $[0,1]$) in general.
Also according to the terminology above, 
\begin{definition}\rm
We call a uninorm $\te$ {\em group-like} if it is residuated, $\nega{\nega{x}}=x$ holds where $\nega{x}=\res{\te}{x}{t}$, and $\nega{t}=t$.
\end{definition}
For group-like uninorms (and also for bounded odd FL$_e$-algebras, in general) we know more about their behaviour in the boundary, 
 as it holds true that 
$$
U(x,y)=
\left\{
\begin{array}{ll}
\in]0,1[						& \mbox{ if $x,y\in]0,1[$}\\
0							& \mbox{ if $\min(x,y)=0$}\\
1							& \mbox{ if $x,y>0$ and $\max(x,y)=1$}
\end{array}
\right. .
$$
Therefore, values of a group-like uninorm $U$ in the open unit square $]0,1[^2$ fully determine $U$. As a consequence, one can view group-like uninorms as binary operations on $]0,1[$, too.
Because of these observations, throughout the paper we shall use the term {\em group-like uninorm} is a slightly different manner: Instead of requiring the underlying universe to be $[0,1]$, we only require that the underlying universe is order isomorphic to the open unit interval $]0,1[$. This way, for example the usual addition of real numbers, that is letting $V(x,y)=x+y$, becomes a group-like uninorm in our terminology. This is witnessed by any order-isomorphism from $]0,1[$ to $\mathbb R$, take for instance $\varphi(x)= \tan(\pi x-\frac{\pi}{2})$.
Using $\varphi$, any group-like uninorm (on $\mathbb R$, for example) can be carried over to $[0,1]$ by letting, in our example,
$$
U(x,y)=
\left\{
\begin{array}{ll}
\varphi^{-1}(V(\varphi(x),\varphi(y)))			& \mbox{ if $x,y\in]0,1[$}\\
0							& \mbox{ if $\min(x,y)=0$}\\
1							& \mbox{ if $x,y\neq 0$ and $\max(x,y)=1$}
\end{array}
\right. .
$$

As said above, odd FL$_e$-chains are involutive FL$_e$-chains satisfying the condition that the unit of the monoidal operation coincides with the constant that defines the order-reversing involution $\komp$; in notation $t=f$.
Since for any involutive FL$_e$-chain $\nega{t}=f$ holds, one extremal situation is the integral case, that is, when $t$ is the top element of the universe and hence $f$ is its bottom one (this is essentially the t-norm case), and the other extremal situation is the group-like case when the two constants coincide \lq\lq and are in the middle\rq\rq.
Prominent examples of odd FL$_e$-algebras are lattice-ordered Abelian groups 
and odd Sugihara monoids, 
the former constitutes an algebraic semantics of Abelian Logic \cite{MeyerAbelian}, and the latter constitutes an algebraic semantics of a logic at the intersection of relevance logic and fuzzy logic \cite{GalRaf}.
These two examples are also extremal in the sense that lattice-ordered Abelian groups have a single idempotent element, namely the unit element, whereas all elements of any odd Sugihara monoid are idempotent.
\begin{figure}
\begin{center}
  \includegraphics[width=0.412\textwidth]{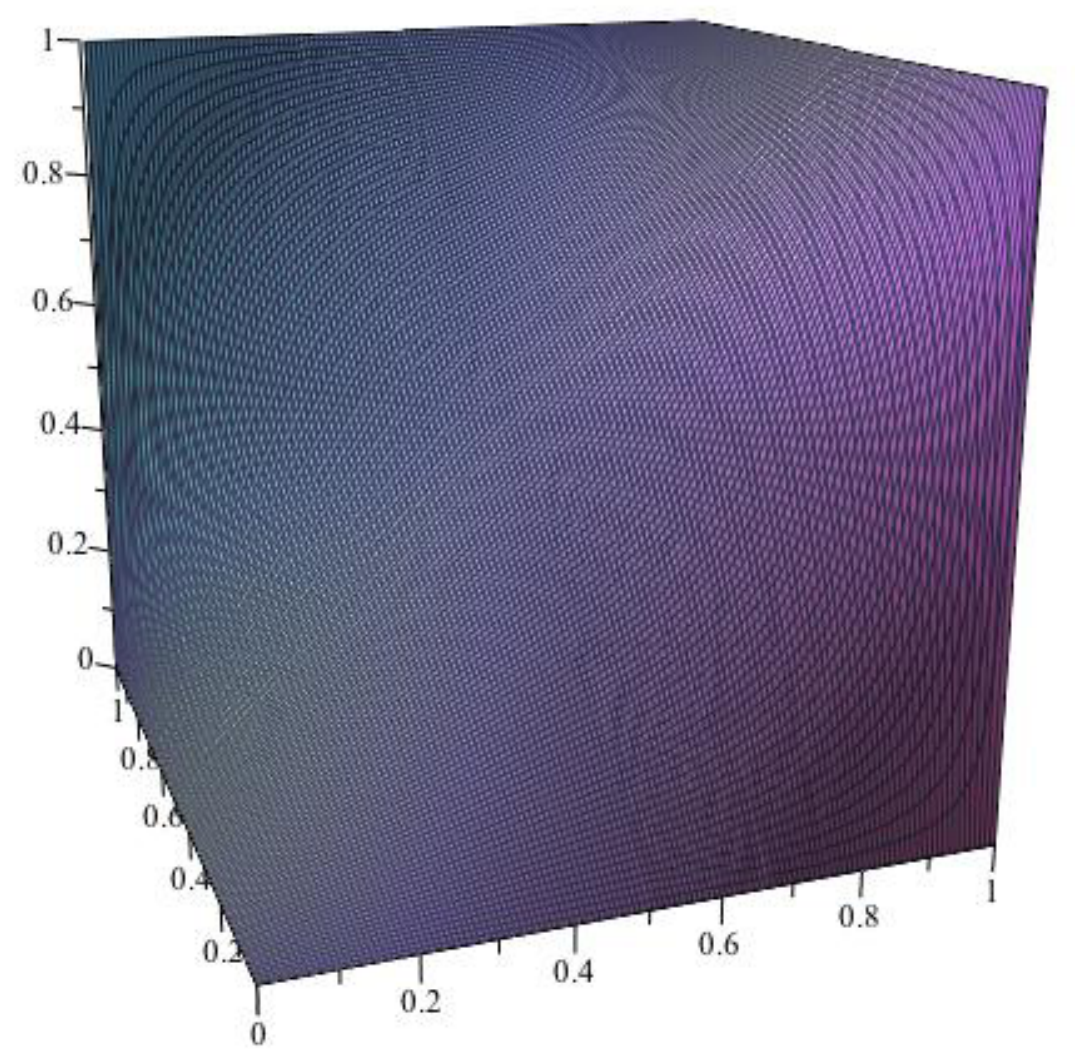} \ \  
  \includegraphics[width=0.41\textwidth]{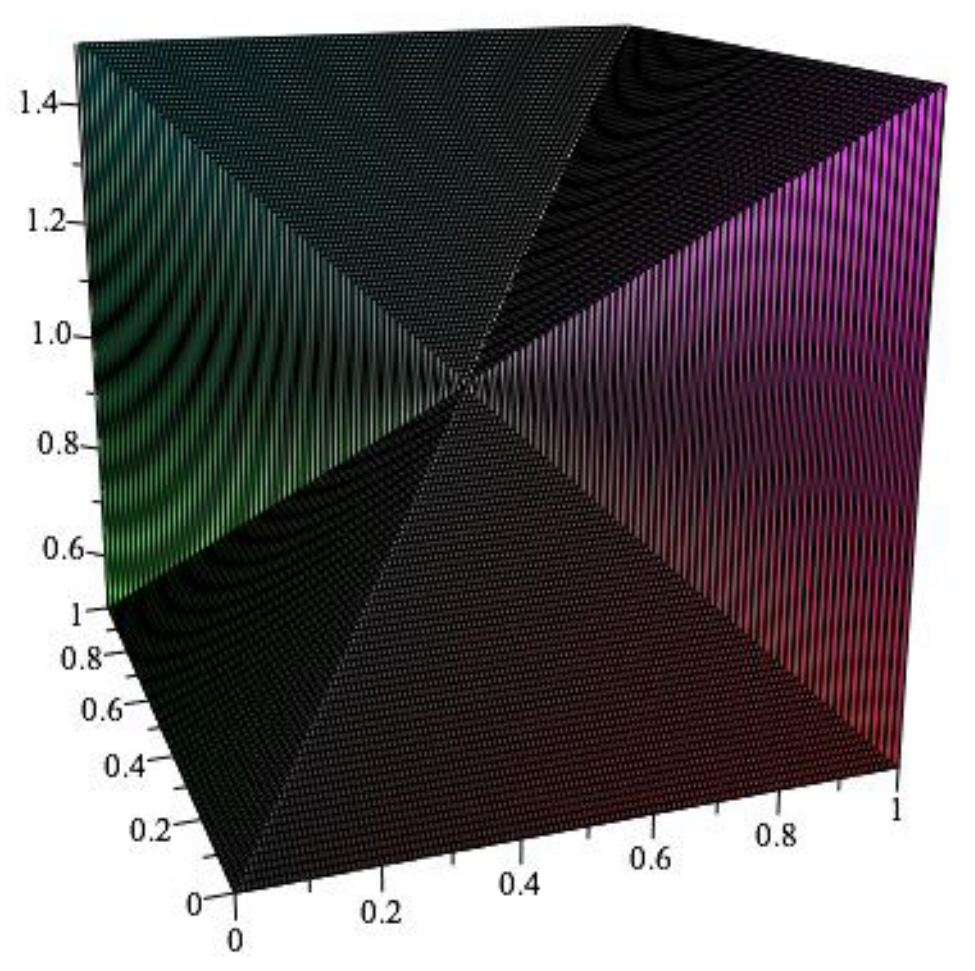} \ \
\caption{Visualization: The only linearly ordered abelian group over $]0,1[$ (left) and the only odd Sugihara monoid over $]0,1[$ (right).}
\label{fig:1}
\end{center}
\end{figure}
In order to narrow the gap between the two extremal classes mentioned above, in \cite{Jenei_Hahn} a deep knowledge have been gained about the structure of odd FL$_e$-chains, including a Hahn-type embedding theorem, and a representation theorem by means of linearly ordered abelian groups and a there-introduced construction, called partial-lexicographic product.
This representation theorem has a crucial role in proving the results of the present paper.
First, we adopt the partial-lexicographic product construction to the setting of group-like uninorms, by introducing two particular ways of applying it. These applications use only  $\mathbb R$ and $\mathbb Z$.
With these variants one can construct group-like uninorms having finitely many idempotent elements.
Our main theorem asserts that all group-like uninorms having finitely many idempotent elements can be constructed by using these two variants. 
Ultimately, it follows that all these uninorms can be constructed by the mentioned two variants of the partial lex-product construction using only $\mathbb R$ and $\mathbb Z$.

Another interpretation of the same result is that all these uninorms can be built by the second variant of the partial lex-product construction from basic group-like uninorms.
If understood this way then there is a striking similarity between this characterization and the well-known ordinal sum representation of continuous t-norms of Mostert and Shields as ordinal sums of continuous archimedean t-norms \cite{Mos57}: 
replace \lq t-norm\rq\, by \lq uninorm\rq, 
\lq continuous\rq\, by \lq group-like with finitely many idempotent elements\rq,
\lq continuous archimedean t-norm\rq\, by \lq basic group-like uninorm\rq,
and
\lq ordinal sum construction\rq\, by \lq the second variant of the partial lex-product construction\rq.
Besides, according to the classification of continuous archimedean t-norms, any continuous archimedean t-norm is order-isomorphic to either the \L ukasiewicz t-norm or the Product t-norm, so there are two prototypes. In our setting basic group-like uninorms have $\aleph_0$ prototypes, one for each natural number.

\section{Preliminaries}

\begin{definition}\label{nyilak}\rm
For a chain (a linearly ordered set) $(X, \leq)$ and for $x\in X$ define the predecessor 
$x_\downarrow$ of $x$ to be the maximal element of the set of elements which are smaller than $x$, if it exists, define $x_\downarrow=x$ otherwise. 
Define the successor $x_\uparrow$ of $x$ dually. 
We say for $Z\subseteq X$ that $Z$ is {\em discretely embedded} into $X$ if 
for $x\in Z$ it holds true that $x\notin\{ x_\uparrow,x_\downarrow\}\subseteq Z$. 
If $\mathbf H$ is subalgebra of an odd FL$_e$-algebra $\mathbf X$, and $H$ is discretely embedded into $X$ then we denote it by $\mathbf H\leq_d\mathbf X$.
We denote by $\mathbb R$ and $\mathbb Z$ the additive group of the reals and the integers, respectively. 
Since linearly ordered abelian groups are exactly {\em cancellative} odd FL$_e$-chains \cite{Jenei_Hahn}
often we shall view $\mathbb R$ and $\mathbb Z$ as odd FL$_e$-chains.
\end{definition}
Crucial for our purposes will be the so-called {\em partial lexicographic product} construction. 
Denote the lexicographic product of two linearly ordered sets $X$ and $Y$ by $X\lex Y$.


\begin{definition}\label{FoKonstrukcio}{\rm \cite{Jenei_Hahn}}
\rm
Let ${\mathbf X}=(X, \wedge_X,\vee_X, \ast, \ite{\ast}, t_X, f_X)$ be an odd FL$_e$-algebra
and ${\mathbf Y}=( Y, \wedge_Y,\vee_Y, \star, \ite{\star}, t_Y, f_Y )$
be an involutive FL$_e$-algebra, with residual complement $\kompM{\ast}$ and $\kompM{\star}$, respectively.

\medskip
\begin{enumerate}
\item[A.]
Add a new element $\top$ to $Y$ as a top element and annihilator (for $\star$), then add a new element $\bot$ to $Y\cup\{\top\}$ as a bottom element and annihilator.
Extend $\kompM{\star}$ by $\negaM{\star}{\bot}=\top$ and $\negaM{\star}{\top}=\bot$.
Let 
$\mathbf W\leq\mathbf V\leq\mathbf X_\mathbf{gr}$. 
Let 
$$
\PLPIII{X}{V}{W}{Y}= (W\times (Y\cup\{\top,\bot\}))\cup ((V\setminus W)\times\{\top,\bot\})\cup \left((X\setminus V)\times \{\bot\}\right),
$$
and let $\PLPIII{\mathbf X}{\mathbf V}{\mathbf W}{\mathbf Y}$, the {\em type III partial lexicographic product} of $\mathbf X,\mathbf V,\mathbf W$ and $\mathbf Y$ be given by
$$\PLPIII{\mathbf X}{\mathbf V}{\mathbf W}{\mathbf Y}=\left(\PLPIII{X}{V}{W}{Y}, \leq, \te, \ite{\te}, (t_X,t_Y),(f_X,f_Y)\right),$$
where $\leq$ is the restriction of the lexicographical order of $\leq_X$ and $\leq_{Y\cup\{\top,\bot\}}$ to $\PLPIII{X}{V}{W}{Y}$, 
$\te$ is defined coordinatewise, and the operation $\ite{\te}$ is given by
$
\res{\te}{(x_1,y_1)}{(x_2,y_2)}=\nega{\left(\g{(x_1,y_1)}{\nega{(x_2,y_2)}}\right)} ,
$
where
$$
\nega{(x,y)}=\left\{
\begin{array}{ll}
(\negaM{\ast}{x},\bot) 		& \mbox{if $x\not\in V$}\\
(\negaM{\ast}{x},\negaM{\star}{y}) 	& \mbox{if $x\in V$}\\
\end{array}
\right. .
$$
In the particular case when $\mathbf W=\mathbf V$, we use the simpler notation 
$\PLPI{\mathbf X}{\mathbf V}{\mathbf Y}$ for $\PLPIII{\mathbf X}{\mathbf V}{\mathbf W}{\mathbf Y}$
and call it
the {\em type I partial lexicographic product} 
of $\mathbf X,\mathbf V$, and $\mathbf Y$.
\item[B.]\label{B}
Assume that $X_{gr}$ 
is discretely embedded into $X$.
Add a new element $\top$ to $Y$ as a top element and annihilator.
Let 
$\mathbf V\leq\mathbf X_\mathbf{gr}$. 
Let
$$
\PLPIV{X}{V}{Y}=(X\times \{\top\})\cup (V\times Y) 
$$
and let 
$\PLPIV{\mathbf X}{\mathbf V}{\mathbf Y}$, 
the {\em type IV partial lexicographic product} of $\mathbf X$, $\mathbf V$ and $\mathbf Y$ be given by
$$\PLPIV{\mathbf X}{\mathbf V}{\mathbf Y}=\left(\PLPIV{X}{V}{Y}, \leq, \te, \ite{\te}, (t_X,t_Y),(f_X,f_Y)\right),$$
where $\leq$ is the restriction of the lexicographical order of $\leq_X$ and $\leq_{ Y\cup\{\top\}}$ to 
$\PLPIV{X}{V}{Y}$,
$\te$ is defined coordinatewise, and the operation $\ite{\te}$ is given by
$
\res{\te}{(x_1,y_1)}{(x_2,y_2)}=\nega{\left(\g{(x_1,y_1)}{\nega{(x_2,y_2)}}\right)} ,
$
where $\komp$ is defined coordinatewise\footnote{Note that intuitively it would make up for a coordinatewise definition, too, in the second line of (\ref{FuraNegaEXT}) to define it as $(\negaM{\ast}{x},\bot)$. 
But $\bot$ is not amongst the set of possible second coordinates. However, since $X_{gr}$ is discretely embedded into $X$, if $(\negaM{\ast}{x},\bot)$ would be an element of the algebra then it would be equal to $((\negaM{\ast}{x})_\downarrow,\top)$. 
}
by
\begin{equation}\label{FuraNegaEXT}
\nega{(x,y)}=
\left\{
\begin{array}{ll}
(\negaM{\ast}{x},\top) 			& \mbox{if $x\not\in X_{gr}$ and $y=\top$}\\
((\negaM{\ast}{x})_\downarrow,\top) 	& \mbox{if $x\in X_{gr}$ and $y=\top$}\\
(\negaM{\ast}{x},\negaM{\star}{y}) 	& \mbox{if $x\in V$ and $y\in Y$}\\
\end{array}
\right. .
\end{equation}
In the particular case when $\mathbf V=\mathbf X_{\mathbf{gr}}$, we use the simpler notation 
$\PLPII{\mathbf X}{\mathbf Y}$ for $\PLPIV{\mathbf X}{\mathbf V}{\mathbf Y}$
and call it
the {\em type II partial lexicographic product} 
of $\mathbf X$ and $\mathbf Y$.
\end{enumerate}
\end{definition}

\begin{theorem}\label{SubLexiTheo}{\rm \cite{Jenei_Hahn}}
Adapt the notation of Definition~\ref{FoKonstrukcio}.
$\PLPIII{\mathbf X}{\mathbf V}{\mathbf W}{\mathbf Y}$
and
$\PLPIV{\mathbf X}{\mathbf V}{\mathbf Y}$
are
involutive FL$_e$-algebras with the same rank\footnote{The rank of an involutive FL$_e$-algebra is positive if $t>f$, negative if $t<f$, and $0$ if $t=f$.} as that of $\mathbf Y$.
In particular, if $\mathbf Y$ is odd then so are 
$\PLPIII{\mathbf X}{\mathbf V}{\mathbf W}{\mathbf Y}$
and
$\PLPIV{\mathbf X}{\mathbf V}{\mathbf Y}$.
In addition, 
$\PLPIII{\mathbf X}{\mathbf V}{\mathbf W}{\mathbf Y}\leq\PLPI{\mathbf X}{\mathbf V}{\mathbf Y}$ and
$\PLPIV{\mathbf X}{\mathbf V}{\mathbf Y}\leq\PLPII{\mathbf X}{\mathbf Y}$.
\end{theorem}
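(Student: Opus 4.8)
The plan is to verify directly that $\PLPIII{\mathbf X}{\mathbf V}{\mathbf W}{\mathbf Y}$ and $\PLPIV{\mathbf X}{\mathbf V}{\mathbf Y}$ satisfy the defining properties of an involutive FL$_e$-algebra (the cases $\mathbf W=\mathbf V$ and $\mathbf V=\mathbf X_\mathbf{gr}$ then cover $\PLPI{\mathbf X}{\mathbf V}{\mathbf Y}$ and $\PLPII{\mathbf X}{\mathbf Y}$), and afterwards to establish the two subalgebra inclusions by a short inspection of universes and operations. The easy half: commutativity and associativity of $\te$ and the fact that $(t_X,t_Y)$ is its unit are immediate from the coordinatewise definition together with the annihilator behaviour of $\top$ and $\bot$ for $\star$ (with $\bot$ overriding $\top$), and the lattice operations are coordinatewise -- in the linearly ordered case just $\min$ and $\max$ -- so there is nothing to check there, nor for closure under $\wedge,\vee$. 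The substance is: (i) the stated universe is closed under $\te$ and under $\komp$, hence (via the identity $\res{\te}{p}{q}=\nega{(\g{p}{\nega{q}})}$, which holds in every involutive FL$_e$-algebra) under $\ite{\te}$ as well; (ii) $\komp$, as given by the piecewise formula, is an order-reversing involution; (iii) the pair $(\te,\ite{\te})$ is adjoint.

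For (ii), observe first that $\mathbf Y^{\top\bot}:=\mathbf Y\cup\{\top,\bot\}$, obtained by adjoining a top annihilator and a bottom annihilator and extending the complement by $\negaM{\star}{\bot}=\top$ and $\negaM{\star}{\top}=\bot$, is a bounded involutive FL$_e$-algebra. Then verifying $\nega{\nega{(x,y)}}=(x,y)$ is a short case check: for $x\in V$ (resp.\ $x\in X_{gr}$) and $y\in Y$ it reads $(\negaM{\ast}{\negaM{\ast}{x}},\negaM{\star}{\negaM{\star}{y}})=(x,y)$, which holds since $\mathbf X$ is odd (hence involutive) and $\mathbf Y$ is involutive; for $x\notin V$ and $y\in\{\top,\bot\}$ it uses only involutivity of $\komp$ on $\mathbf X$ and on $\mathbf Y^{\top\bot}$. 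The single delicate case is line~2 of (\ref{FuraNegaEXT}), where $\nega{(x,\top)}=((\negaM{\ast}{x})_\downarrow,\top)$ for $x\in X_{gr}$: here the hypothesis that $X_{gr}$ is discretely embedded into $X$ guarantees that $(\negaM{\ast}{x})_\downarrow$ genuinely exists and again lies in $X_{gr}$, and, combined with the fact that the antitone bijection $\komp$ of $\mathbf X$ carries predecessors to successors, i.e.\ $\negaM{\ast}{(z_\downarrow)}=(\negaM{\ast}{z})_\uparrow$, one obtains $\nega{\nega{(x,\top)}}=((\negaM{\ast}{(\negaM{\ast}{x})_\downarrow})_\downarrow,\top)=((x_\uparrow)_\downarrow,\top)=(x,\top)$. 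Antitonicity of $\komp$ is then a routine monotonicity check, using also that $z\mapsto z_\downarrow$ is order preserving. (Without discrete embedding the value prescribed in line~2 of (\ref{FuraNegaEXT}) would not even be well defined: intuitively the complement of $(x,\top)$ ought to be $(\negaM{\ast}{x},\bot)$, which is not an element of the algebra, and it coincides with $((\negaM{\ast}{x})_\downarrow,\top)$ precisely because no element of $X$ lies strictly between $(\negaM{\ast}{x})_\downarrow$ and $\negaM{\ast}{x}$.)

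For (iii), the identity $\res{\te}{p}{q}=\nega{(\g{p}{\nega{q}})}$ reduces the adjointness condition $\g{a}{b}\leq c\iff b\leq\res{\te}{a}{c}$ to the contraposition law $\g{a}{b}\leq c\iff\g{a}{\nega{c}}\leq\nega{b}$, which I would verify by case analysis on the positions of the first coordinates of $a,b,c$ relative to $V$ (resp.\ $X_{gr}$) and on whether their second coordinates lie in $Y$ or in $\{\top,\bot\}$. This is exactly where the hypothesis $\mathbf W\leq\mathbf V\leq\mathbf X_\mathbf{gr}$ (resp.\ $\mathbf V\leq\mathbf X_\mathbf{gr}$) is essential: on the fibres over $V$ the first coordinate ranges over a group, so the $\ast$-residuals there reach, $x_1\ast(\res{\ast}{x_1}{x_2})=x_2$, and whenever the lexicographic residual overshoots it lands on the extremal values $\top$ or $\bot$ that were added to form $\mathbf Y^{\top\bot}$; over the remaining elements the fibre is a singleton, so the first coordinate alone decides the order, and neither $\ast$ nor $\komp$ can carry an element back onto a fibre over $V$ (resp.\ $X_{gr}$), which keeps the cases disjoint. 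This case analysis is long but entirely mechanical; it is the main obstacle of the proof.

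It remains to check the subalgebra inclusions and read off the rank. Since $\mathbf W\leq\mathbf V$, the underlying set of $\PLPIII{\mathbf X}{\mathbf V}{\mathbf W}{\mathbf Y}$ is contained in that of $\PLPI{\mathbf X}{\mathbf V}{\mathbf Y}$, the operations are the corresponding restrictions, the constants $(t_X,t_Y)$ and $(f_X,f_Y)=(t_X,f_Y)$ lie in the smaller set because $t_X\in W$, and closure of the three pieces of $\PLPIII{X}{V}{W}{Y}$ under $\te$ and $\komp$ follows from $W$ and $V$ being subalgebras of $\mathbf X_\mathbf{gr}$ (hence closed under $\ast$ and $\komp$ and consisting of invertible elements) together with $\{\top,\bot\}$ being closed under $\star$ and under $\kompM{\star}$; hence $\PLPIII{\mathbf X}{\mathbf V}{\mathbf W}{\mathbf Y}\leq\PLPI{\mathbf X}{\mathbf V}{\mathbf Y}$. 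The inclusion $\PLPIV{\mathbf X}{\mathbf V}{\mathbf Y}\leq\PLPII{\mathbf X}{\mathbf Y}$ is analogous, using $\mathbf V\leq\mathbf X_\mathbf{gr}$ and that on the smaller universe the piecewise complement (\ref{FuraNegaEXT}) agrees with the one of $\PLPII{\mathbf X}{\mathbf Y}$. Finally the rank: $t=(t_X,t_Y)$ and $f=(f_X,f_Y)$, and since $\mathbf X$ is odd we have $t_X=f_X$, so $t$ and $f$ compare in the lexicographic order exactly as $t_Y$ and $f_Y$ do; hence the rank of each of these products equals that of $\mathbf Y$, and in particular, if $\mathbf Y$ is odd then $t=f$, i.e.\ the product is odd.
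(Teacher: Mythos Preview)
The paper does not prove Theorem~\ref{SubLexiTheo}: it is quoted from \cite{Jenei_Hahn} and stated here without proof, so there is no in-paper argument to compare your proposal against.

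That said, your outline is the natural direct verification and matches what one would expect the cited proof to do: check closure of the universe under $\te$ and $\komp$, verify that the piecewise $\komp$ is an order-reversing involution (with the discrete-embedding hypothesis handling the only subtle case, line~2 of (\ref{FuraNegaEXT})), reduce adjointness to the contraposition law and dispatch it by a case split governed by membership in $V$ (resp.\ $X_{gr}$), and finally read off the subalgebra inclusions and the rank from $t_X=f_X$. Your identification of where each hypothesis ($\mathbf W\leq\mathbf V\leq\mathbf X_\mathbf{gr}$, discrete embedding of $X_{gr}$) is actually used is accurate, and the rank computation is correct. The only part you leave as ``long but entirely mechanical'' is the adjointness case analysis; that is indeed the bulk of the work, and nothing in your sketch suggests a wrong approach to it.
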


The main theorem on which we shall rely on in the present paper asserts that up to isomorphism, any odd FL$_e$-chain which has only finitely many positive idempotent elements can be built by iterating finitely many times the type III and type IV partial lexicographic product constructions 
using only linearly ordered abelian groups, as building blocks.

\medskip
A half-line proof shows (see \cite[Section 2]{Jenei_Hahn}) that in any odd FL$_e$-chain (in particular, in any group-like uninorm) the residual complement of any negative idempotent element is a positive idempotent element. Therefore, for a group-like uninorm, having finitely many idempotent elements is equivalent to having finitely many positive idempotent elements.

\begin{theorem}\label{Hahn_type}
If $\mathbf X$ is an odd FL$_e$-chain, which has only $n\in\mathbf N$, $n\geq 1$ idempotents in its positive cone then there exist linearly ordered abelian groups $\mathbf G_i$ $(i\in\{1,2,\ldots,n\})$, 
$\mathbf W_1\leq\mathbf V_1\leq \mathbf G_1$, $\mathbf W_i\leq\mathbf V_i\leq\mathbf W_{i-1}\lex\mathbf G_i$ $(i\in\{2,\ldots,n-1\})$,
and a binary sequence $\iota\in \{III,IV\}^{\{2,\ldots,n\}}$
such that 
$\mathbf X\simeq\mathbf X_n$, where
$\mathbf X_1:=\mathbf G_1$ and for $i\in\{2,\ldots,n\}$,
\begin{equation}\label{EzABeszed}
\mathbf X_i:=
\left\{
\begin{array}{ll}
\PLPIII{\mathbf X_{i-1}}{\mathbf V_{i-1}}{\mathbf W_{i-1}}{\mathbf G_i}	& \mbox{ if $\iota_i=III$}\\
\PLPIV{\mathbf X_{i-1}}{\mathbf V_{i-1}}{\mathbf G_i} 	& \mbox{ if $\iota_i=IV$}\\
\end{array}
\right. .
\end{equation}
\end{theorem}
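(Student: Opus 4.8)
The plan is to proceed by induction on $n$, the number of positive idempotents of $\mathbf X$. For $n=1$ the positive cone has a single idempotent, so $\mathbf X$ is cancellative, hence (by the cited characterization in \cite{Jenei_Hahn}) a linearly ordered abelian group, and we take $\mathbf X_1:=\mathbf G_1:=\mathbf X$. For the inductive step, the idea is to peel off one \lq\lq layer\rq\rq\ from the top of the structure: let $j$ be the largest positive idempotent strictly below $t$ if one exists (and otherwise argue that $\mathbf X$ is already of the required shape with a cancellative top block). More precisely, I would examine the smallest positive non-identity idempotent, call it $u$, look at the interval $[\nega{u},u]$ which carries a subalgebra, and identify it as the group part $\mathbf G_n$ or as a base block; the complementary part of $\mathbf X$ carries an odd FL$_e$-chain $\mathbf X_{n-1}$ with exactly $n-1$ positive idempotents, to which the induction hypothesis applies.

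The real work is to show that $\mathbf X$ is recovered from $\mathbf X_{n-1}$ and the abelian group $\mathbf G_n$ by exactly one application of either the type~III or the type~IV partial lexicographic product, with a suitably chosen $\mathbf V_{n-1}\le\mathbf W_{n-2}\lex\mathbf G_{n-1}$ and, in the type~III case, $\mathbf W_{n-1}\le\mathbf V_{n-1}$. Here the dichotomy $\iota_n\in\{III,IV\}$ is dictated by the local order-theoretic behaviour of $\mathbf X$ around $u$: if $u$ has a predecessor $u_\downarrow$ inside $\mathbf X$ (equivalently, the group part is discretely embedded near that level) then we are in the type~IV situation, otherwise in the type~III situation. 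In both cases one must produce the explicit order isomorphism between $\mathbf X$ and $\PLPIII{\mathbf X_{n-1}}{\mathbf V_{n-1}}{\mathbf W_{n-1}}{\mathbf G_n}$ (resp.\ $\PLPIV{\mathbf X_{n-1}}{\mathbf V_{n-1}}{\mathbf G_n}$): an element $x$ of $\mathbf X$ gets sent to a pair whose first coordinate records which \lq\lq macro-interval\rq\rq\ (determined by the idempotents) $x$ lies in, and whose second coordinate records the position of $x$ within that interval, read off via translation by the relevant idempotent. One then checks that this bijection respects $\le$, $\te$, and the involution $\komp$ — the involution being the delicate point because of the asymmetric clauses in the definitions of $\komp$ on $\PLPIII{X}{V}{W}{Y}$ and on $\PLPIV{X}{V}{Y}$ (the $\top$/$\bot$ annihilators, and the $(\negaM{\ast}{x})_\downarrow$ shift in \eqref{FuraNegaEXT}).

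The main obstacle is precisely this last verification: proving that the group part $\mathbf X_\mathbf{gr}$, which at each idempotent level either is discretely embedded or is dense, matches the two alternative construction templates, and that the constraints $\mathbf W_i\le\mathbf V_i\le\mathbf W_{i-1}\lex\mathbf G_i$ are automatically met by the subgroups that arise. This is where the structural results of \cite{Jenei_Hahn} — the Hahn-type embedding theorem and the behaviour of idempotents and of the group part in an odd FL$_e$-chain — do the heavy lifting: they guarantee that the \lq\lq width\rq\rq\ of $\mathbf X$ at each level is faithfully captured by a linearly ordered abelian group and that cutting at $u$ leaves an odd FL$_e$-chain of the same kind with one fewer positive idempotent. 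Once the single-step decomposition is established, the statement follows by assembling the $n-1$ steps of the induction into the sequence $\mathbf X_1,\dots,\mathbf X_n$ and the binary word $\iota\in\{III,IV\}^{\{2,\dots,n\}}$, which is exactly the recursion \eqref{EzABeszed}.
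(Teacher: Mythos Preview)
The paper does not contain a proof of Theorem~\ref{Hahn_type}. It is stated in the Preliminaries section alongside Theorem~\ref{SubLexiTheo} and Lemma~\ref{AB-C=A-BC}, all of which are imported from the companion papers \cite{Jenei_Hahn} and \cite{Jenei_IULfp} and used here only as black boxes in the proof of the paper's own main result, Theorem~\ref{GrLikeUni}. There is therefore no in-paper argument to compare your outline against.

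As a strategy your inductive plan is broadly the natural one, and the base case $n=1$ is correct. But two points in your inductive step are not yet pinned down. First, you have not said what $\mathbf X_{n-1}$ actually \emph{is} as an object built from $\mathbf X$: it is neither a subalgebra nor an ordinary quotient, and the assertion that ``the complementary part of $\mathbf X$ carries an odd FL$_e$-chain $\mathbf X_{n-1}$'' is precisely the content that needs to be supplied --- one must define a suitable equivalence on $X$ whose classes are convex, show that the induced structure is again residuated and involutive with $t=f$, and verify it has exactly $n-1$ positive idempotents. Second, your criterion for choosing $\iota_n\in\{III,IV\}$ is placed on the wrong algebra: in Definition~\ref{FoKonstrukcio}B the type~IV construction requires that $(X_{n-1})_{gr}$ be discretely embedded into $X_{n-1}$, so the dichotomy is read off the \emph{collapsed} structure $\mathbf X_{n-1}$, not from whether your idempotent $u$ has a predecessor in $\mathbf X$. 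These are the substantive gaps; the rest of your sketch (matching $\le$, $\te$, and the involution coordinatewise, with the $(\negaM{\ast}{x})_\downarrow$ shift as the delicate point) is the right checklist once those two issues are resolved.
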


\begin{lemma}\label{AB-C=A-BC}{\rm \cite{Jenei_IULfp}}
For any odd FL$_e$-algebras $\mathbf A$, $\mathbf B$, $\mathbf C$, it holds true that
$$
\PLPII{(\PLPII{\mathbf A}{\mathbf B})}{\mathbf C}\simeq\PLPII{\mathbf A}{(\PLPII{\mathbf B}{\mathbf C})},
$$ 
that is, if the algebra on one side is well-defined then the algebra on the other side is well-defined, too, and the two algebras are isomorphic.
\end{lemma}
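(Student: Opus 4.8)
The plan is to exhibit the isomorphism explicitly as a \lq\lq reassociation\rq\rq\ map. Represent every element $((a,b),c)$ of $\PLPII{(\PLPII{\mathbf A}{\mathbf B})}{\mathbf C}$ and every element $(a,(b,c))$ of $\PLPII{\mathbf A}{(\PLPII{\mathbf B}{\mathbf C})}$ by the triple $(a,b,c)$, with the convention that the top element adjoined to $\PLPII{\mathbf B}{\mathbf C}$ when forming $\PLPII{\mathbf A}{(\PLPII{\mathbf B}{\mathbf C})}$ corresponds to the pair $(\top,\top)$ built from the top adjoined to $\mathbf B$ and the top adjoined to $\mathbf C$; let $\phi$ be the identity on these triples. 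Before this is meaningful one must see that the two sides are defined under the same hypotheses, and for this it is enough to note that $(\PLPII{\mathbf X}{\mathbf Y})_\mathbf{gr}=\mathbf X_\mathbf{gr}\lex\mathbf Y_\mathbf{gr}$: a pair with second coordinate $\top$ is never invertible since $\top$ is an annihilator, whereas $(x,y)$ with $x$ invertible in $\mathbf X$ is invertible in $\PLPII{\mathbf X}{\mathbf Y}$ exactly when $y$ is invertible in $\mathbf Y$. Hence $\PLPII{(\PLPII{\mathbf A}{\mathbf B})}{\mathbf C}$ is defined iff $\mathbf A_\mathbf{gr}\leq_d\mathbf A$ and $\mathbf A_\mathbf{gr}\lex\mathbf B_\mathbf{gr}\leq_d\PLPII{\mathbf A}{\mathbf B}$, while $\PLPII{\mathbf A}{(\PLPII{\mathbf B}{\mathbf C})}$ is defined iff $\mathbf B_\mathbf{gr}\leq_d\mathbf B$ and $\mathbf A_\mathbf{gr}\leq_d\mathbf A$, so it suffices to prove that, under $\mathbf A_\mathbf{gr}\leq_d\mathbf A$, the condition $\mathbf A_\mathbf{gr}\lex\mathbf B_\mathbf{gr}\leq_d\PLPII{\mathbf A}{\mathbf B}$ is equivalent to $\mathbf B_\mathbf{gr}\leq_d\mathbf B$. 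This is a short order computation: for $a\in A_{gr}$ the whole column $\{a\}\times(B\cup\{\top\})$ lies inside $\PLPII{A}{B}$, so for $b\in B_{gr}$ the predecessor and the successor of $(a,b)$ in $\PLPII{A}{B}$ are obtained by taking the predecessor and the successor of $b$ in $B$, and these are genuine and fall into $A_{gr}\lex B_{gr}$ precisely when the predecessor and successor of $b$ in $B$ are genuine and fall into $B_{gr}$.

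Next I would check that $\phi$ preserves $\leq$: in both algebras $\leq$ is, by definition, the restriction of the lexicographic order, and the lexicographic orders of $(X\lex Y)\lex Z$ and of $X\lex(Y\lex Z)$ are literally the same relation on triples; the matching of the various $\top$'s is compatible with this because each adjoined $\top$ was placed on top of its factor. Being an order isomorphism between lattices, $\phi$ is then a lattice isomorphism. Moreover $\phi$ preserves $\te$: in both algebras the monoidal operation is computed coordinatewise, with $\top$ absorbing and with products that would fall outside the (non-full) product set rounded up to the appropriate $\top$, and a routine case split according to which coordinates are genuine and which are $\top$ shows that these two prescriptions coincide on triples. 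Finally $\phi$ maps the unit $((t_{\mathbf A},t_{\mathbf B}),t_{\mathbf C})$ to $(t_{\mathbf A},(t_{\mathbf B},t_{\mathbf C}))$, and likewise for $f$.

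The only genuinely delicate point, and the step I expect to be the main obstacle, is that $\phi$ commutes with the residual complement. One has to unfold (\ref{FuraNegaEXT}) twice on each side: on the left, first in the outer product $\PLPII{(\PLPII{\mathbf A}{\mathbf B})}{\mathbf C}$ (using $(\PLPII{\mathbf A}{\mathbf B})_\mathbf{gr}=\mathbf A_\mathbf{gr}\lex\mathbf B_\mathbf{gr}$) and then, where it is needed, in the inner product $\PLPII{\mathbf A}{\mathbf B}$; on the right, first in the outer product $\PLPII{\mathbf A}{(\PLPII{\mathbf B}{\mathbf C})}$ and then, where needed, in the inner product $\PLPII{\mathbf B}{\mathbf C}$. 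The subtle clause is the \lq\lq predecessor\rq\rq\ case $((\nega{x})_\downarrow,\top)$ of (\ref{FuraNegaEXT}): on the left it is triggered when the first coordinate of the triple lies in $A_{gr}\lex B_{gr}$ and the third coordinate is $\top$, and one must observe that for $(a,b)\in A_{gr}\lex B_{gr}$ the predecessor of $(\nega{a},\nega{b})$ inside $\PLPII{A}{B}$ is precisely $(\nega{a},(\nega{b})_\downarrow)$ — which is exactly what the inner application of (\ref{FuraNegaEXT}) produces on the right when it meets its own predecessor clause. Running through the cases of a triple $(a,b,c)$ determined by which coordinates are the adjoined tops and which are genuine elements (together with the resulting sub-splits demanded by (\ref{FuraNegaEXT}), such as whether $a\in A_{gr}$ or $b\in B_{gr}$) shows in each case that the two computed complements agree under $\phi$. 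Once $\te$ and the residual complement are matched, the residuum is matched automatically, since $\res{\te}{u}{v}=\nega{(\g{u}{\nega{v}})}$ holds in both algebras; hence $\phi$ is an isomorphism of odd FL$_e$-algebras (and, in particular, one side is well defined iff the other is), both sides being odd FL$_e$-algebras whenever defined by Theorem~\ref{SubLexiTheo}.
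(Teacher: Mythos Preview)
Your plan is correct and largely parallel to the paper's approach (the lemma is quoted from \cite{Jenei_IULfp}, but the companion Lemma~\ref{Vegyes} in this paper is proved in just this way): identify the two universes as the same set of triples via the convention $\top_{\PLPII{B}{C}}=(\top_B,\top_C)$, observe that the lexicographic orders agree, that the coordinatewise products agree, and that the units agree.

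The one real difference is the step you single out as delicate. You verify the residual complement directly, unfolding (\ref{FuraNegaEXT}) twice on each side and matching the predecessor clauses through a case split, and only then recover $\ite{\te}$ from $\res{\te}{u}{v}=\nega{(\g{u}{\nega{v}})}$. The paper goes the other way around and thereby skips the entire case analysis: once the two algebras have the same universe, the same order, and the same $\te$, and both are known to be residuated (Theorem~\ref{SubLexiTheo}), the residual $\ite{\te}$ is \emph{uniquely determined} by $\te$ and $\leq$ (it is $\res{\te}{x}{z}=\max\{v : \g{x}{v}\leq z\}$), so the two residuals coincide automatically; the complement then follows as $\nega{x}=\res{\te}{x}{f}$. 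Your route is valid and more self-contained, but the paper's shortcut eliminates what you correctly identified as the laborious part of the argument.
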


\section{Structural description}

\begin{proposition}\label{TUTTItype12}
The following statements hold true.
\begin{enumerate}
\item 
Any type III extension which is not of type I has a gap\footnote{Gap refers to two consecutive elements, that is $x\leq y$ such that if $x\leq z\leq y$ then $z=x$ or $z=y$.} outside its group part.
\item 
Any type IV extension which is not of type II has a gap outside its group part.
\item 
If an odd FL$_e$-algebra has a gap outside its group part then
any type I extension of it has a gap outside its group part, too.
\item 
If an odd FL$_e$-algebra has a gap outside its group part then
any type II extension of it has a gap outside its group part, too.
\end{enumerate}
\end{proposition}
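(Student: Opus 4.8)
The plan is to handle all four items by the same two-step recipe applied to the relevant construction from Definition~\ref{FoKonstrukcio}: first determine the group part, then exhibit two consecutive elements both lying outside it.

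\emph{Step 1 (group parts).} For each of the four products $\PLPIII{\mathbf X}{\mathbf V}{\mathbf W}{\mathbf Y}$, $\PLPI{\mathbf X}{\mathbf V}{\mathbf Y}$, $\PLPIV{\mathbf X}{\mathbf V}{\mathbf Y}$, $\PLPII{\mathbf X}{\mathbf Y}$ I would compute the set $\{z : z\te\nega z \text{ is the unit}\}$ directly from the explicit formulas for $\komp$ and the coordinatewise definition of $\te$. A short case analysis — splitting on whether the first coordinate lies in $W$, in $V\setminus W$, or outside $V$, and on whether the second coordinate lies in $Y$ or is one of the adjoined annihilators $\top,\bot$ — yields that the group part of $\PLPIII{\mathbf X}{\mathbf V}{\mathbf W}{\mathbf Y}$ is $W\times gr(Y)$, that of $\PLPI{\mathbf X}{\mathbf V}{\mathbf Y}$ and of $\PLPIV{\mathbf X}{\mathbf V}{\mathbf Y}$ is $V\times gr(Y)$, and that of $\PLPII{\mathbf X}{\mathbf Y}$ is $X_{gr}\times gr(Y)$. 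In fact the only consequence I will use is the uniform one: any element whose second coordinate is $\top$ or $\bot$ lies outside the group part, since $\top,\bot\notin gr(Y)$ (already $\top\star\top=\top\neq t_Y$ and $\bot\star\bot=\bot\neq t_Y$).

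\emph{Step 2 (exhibiting the gap).} In each of these products the first coordinates that actually occur range over all of $X$ (resp.\ the underlying chain of the base algebra), and for each fixed first coordinate the admissible second coordinates form an interval of $Y\cup\{\top,\bot\}$; together these make every consecutiveness check a one-line verification in the lexicographic order. For item~1, being ``not of type I'' means $\mathbf W\neq\mathbf V$, so (as $W\subseteq V$) I would pick $v\in V\setminus W$; the only second coordinates admissible over $v$ are $\top$ and $\bot$, so $(v,\bot)$ and $(v,\top)$ are consecutive and, by Step~1, both lie outside the group part. For item~2, being ``not of type II'' means $\mathbf V\neq\mathbf X_{\mathbf{gr}}$, so I would pick $v\in X_{gr}\setminus V$; since $X_{gr}$ is discretely embedded in $X$, $v$ has a genuine predecessor $v_\downarrow$ in $X$, and $\top$ is the only admissible second coordinate over both $v_\downarrow$ and $v$, so $(v_\downarrow,\top)$ and $(v,\top)$ are consecutive and both lie outside the group part. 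For item~3, a gap formed by consecutive $a<b$ of $\mathbf X$ with $a,b\notin gr(X)$ has $a,b\notin V$ (as $V\subseteq gr(X)$), hence $\bot$ is the only admissible second coordinate over $a$ and over $b$, so $(a,\bot)$ and $(b,\bot)$ are consecutive in $\PLPI{\mathbf X}{\mathbf V}{\mathbf Y}$ and both lie outside its group part. For item~4, such a gap has $a,b\notin X_{gr}$, hence $\top$ is the only admissible second coordinate over $a$ and over $b$, so $(a,\top)$ and $(b,\top)$ are consecutive in $\PLPII{\mathbf X}{\mathbf Y}$ and both lie outside its group part.

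I do not expect a serious obstacle. The substantive part is the group-part computation of Step~1, which is a routine (if slightly tedious) case analysis; the only real care needed in Step~2 is checking that nothing sits strictly between the two exhibited elements, which is exactly where one must invoke that the admissible second coordinates over a given first coordinate are restricted — and, for item~2, that $v\in X_{gr}$ has a predecessor, the one place the discrete-embedding hypothesis enters the argument itself rather than merely guaranteeing that the construction is well-defined.
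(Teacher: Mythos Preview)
Your approach is essentially the paper's: in each item you exhibit the same pair of consecutive non-invertible elements (in item~2 you use the predecessor rather than the successor, which is harmless by symmetry), and your Step~1 makes explicit what the paper leaves implicit. One small slip in item~2: you assert that $\top$ is the only admissible second coordinate over $v_\downarrow$, but $v_\downarrow$ may lie in $V$; this does not spoil consecutiveness, since any $(v_\downarrow,y)$ with $y\in Y$ sits below $(v_\downarrow,\top)$ rather than strictly between your two elements, so the argument still goes through once you adjust that sentence.
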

\begin{proof}
The statements are direct consequences of the definition of partial lexicographic products:
\begin{enumerate}
\item 
Consider $\PLPIII{\mathbf A}{\mathbf H}{\mathbf K}{\mathbf B}$, where $H\setminus K\neq\emptyset$. For any $a\in H\setminus K$ it holds true 
that $(a,\bot)<(a,\top)$ is a gap in $\PLPIII{\mathbf A}{\mathbf H}{\mathbf K}{\mathbf B}$, and neither $(a,\bot)$ nor $(a,\top)$ is invertible.
\item 
Consider $\PLPIV{\mathbf A}{\mathbf H}{\mathbf B}$, where $A_{gr}\setminus H\neq\emptyset$. For any $a\in A_{gr}\setminus H$ it holds true 
that $(a,\top)<(a_\uparrow,\top)$ is a gap in $\PLPIII{\mathbf A}{\mathbf H}{\mathbf K}{\mathbf B}$, and neither $(a,\top)$ nor $(a_\uparrow,\top)$ is invertible.

\item 
Consider $\PLPI{\mathbf A}{\mathbf H}{\mathbf B}$ with a gap $r<s$ in $A\setminus A_{gr}$.
Then 
$(r,\bot)<(s,\bot)$ is a gap in $\PLPI{\mathbf A}{\mathbf H}{\mathbf B}$, and neither $(r,\bot)$ nor $(s,\bot)$ is invertible.
\item 
Consider $\PLPII{\mathbf A}{\mathbf B}$ with a gap $r<s$ in $A\setminus A_{gr}$.
Then 
$(r,\top)<(s,\top)$ is a gap in $\PLPI{\mathbf A}{\mathbf H}{\mathbf B}$, and neither $(r,\top)$ nor $(s,\top)$ is invertible.
\end{enumerate}
\qed\end{proof}

\begin{lemma}\label{Vegyes}
For any odd FL$_e$-algebras $\mathbf A$, $\mathbf H$, $\mathbf L$, $\mathbf B$, such that 
$\mathbf H\leq\mathbf A_\mathbf{gr}$, 
it holds true that
$$\PLPII{(\PLPI{\mathbf A}{\mathbf H}{\mathbf L})}{\mathbf B} \simeq \PLPI{\mathbf A}{\mathbf H}{(\PLPII{\mathbf L}{\mathbf B})}$$ 
that is, if the algebra on one side is well-defined then the algebra on the other side is well-defined, too, and the two algebras are isomorphic.
\end{lemma}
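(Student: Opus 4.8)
The plan is to write down an explicit isomorphism between the two algebras and verify it works. Put $\mathbf X:=\PLPI{\mathbf A}{\mathbf H}{\mathbf L}$ and $\mathbf Z:=\PLPII{\mathbf L}{\mathbf B}$, and unpack both sides into structures carried by triples. The left-hand universe is $(X\times\{\top_B\})\cup(X_{gr}\times B)$, where $X=(H\times(L\cup\{\top_L,\bot_L\}))\cup((A\setminus H)\times\{\bot_L\})$; the right-hand universe is $(H\times(Z\cup\{\top_Z,\bot_Z\}))\cup((A\setminus H)\times\{\bot_Z\})$, where $Z=(L\times\{\top_B\})\cup(L_{gr}\times B)$. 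I would define $\Phi$ from the left universe to the right one by $\Phi(((a,y),c))=(a,(y,c))$ whenever $y\in L$ (note that then $(y,c)\in Z$, since either $c=\top_B$, or $y\in L_{gr}$ and $c\in B$), $\Phi(((a,\top_L),\top_B))=(a,\top_Z)$, and $\Phi(((a,\bot_L),\top_B))=(a,\bot_Z)$, and then check, one first coordinate $a$ at a time, that $\Phi$ is a bijection (for $a\in A\setminus H$ each side has exactly the one element $((a,\bot_L),\top_B)$, resp. $(a,\bot_Z)$).

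Next I would dispose of the well-definedness clause. Under the standing hypothesis $\mathbf H\leq\mathbf A_\mathbf{gr}$, the only further constraint implicit on either side is a discrete-embedding requirement: on the left, the outer type II product needs $\mathbf X_\mathbf{gr}\leq_d\mathbf X$, and on the right, the inner type II product needs $\mathbf L_\mathbf{gr}\leq_d\mathbf L$. A one-line computation of the invertible elements of a type I product gives $X_{gr}=H\times L_{gr}$ (with $\mathbf X_\mathbf{gr}$ the corresponding subalgebra). Since for $a\in H$ the $a$-slice of $\mathbf X$ is $\{a\}\times(L\cup\{\top_L,\bot_L\})$ and $\top_L,\bot_L\notin L_{gr}$, the successor (respectively predecessor) in $\mathbf X$ of an element $(a,y)$ with $y\in L_{gr}$ lands in $H\times L_{gr}$ exactly when the successor (respectively predecessor) of $y$ in $\mathbf L$ exists and lies in $L_{gr}$; hence $\mathbf X_\mathbf{gr}\leq_d\mathbf X$ iff $\mathbf L_\mathbf{gr}\leq_d\mathbf L$. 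So both sides are well-defined under one and the same condition, and $\Phi$ is genuinely a map between two legitimate algebras.

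Then I would check that $\Phi$ is an order isomorphism and a homomorphism for $\te$. Both orders are three-level lexicographic with the same priority (first $a$, then $y$, then $c$) --- the left one because $\leq_X$ is itself lexicographic, the right one because $\leq_Z$ is --- so order preservation reduces, after fixing $a$, to the obvious fact that $(y,c)\mapsto(y,c)$, $(\top_L,\top_B)\mapsto\top_Z$, $(\bot_L,\top_B)\mapsto\bot_Z$ respects the layer orders, because $\top_L,\top_Z$ are tops and $\bot_L,\bot_Z$ are bottoms of those layers. For $\te$, both operations are coordinatewise, so $\Phi(p\te q)=\Phi(p)\te\Phi(q)$ becomes a finite case distinction driven by which of the inner coordinates equal $\top_L$, $\bot_L$ or $\top_B$; it uses only that $\top_L,\bot_L$ are absorbing in $L\cup\{\top_L,\bot_L\}$, that $\top_B$ is absorbing in $B\cup\{\top_B\}$, that the freshly adjoined $\top_Z,\bot_Z$ are absorbing in $Z\cup\{\top_Z,\bot_Z\}$, and that in each of these $\bot$ absorbs $\top$ (consistent with the order in which the absorbing elements were adjoined). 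I would also record $\Phi(t)=t$ and $t=f$, immediate from $\Phi(((t_A,t_L),t_B))=(t_A,(t_L,t_B))$.

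Finally, by Theorem~\ref{SubLexiTheo} both sides are involutive FL$_e$-algebras, hence residuated; and a bijective order isomorphism that preserves $\te$ automatically preserves the residuum, since for every $w$ one has $w\leq\Phi(\res{\te}{p}{r})\iff\Phi^{-1}(w)\leq\res{\te}{p}{r}\iff p\te\Phi^{-1}(w)\leq r\iff\Phi(p)\te w\leq\Phi(r)\iff w\leq\res{\te}{\Phi(p)}{\Phi(r)}$, whence $\Phi(\res{\te}{p}{r})=\res{\te}{\Phi(p)}{\Phi(r)}$; since moreover $\Phi$ preserves $f=t$, the residual complement is preserved as well, so $\Phi$ is an isomorphism of FL$_e$-algebras. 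I expect the work to be purely bookkeeping: the left side carries two layers of adjoined absorbing elements ($\top_L,\bot_L$ from the type I step and $\top_B$ from the type II step) while the right side carries one such layer inside $Z$ and a freshly adjoined pair $\top_Z,\bot_Z$ on the outside, so the only real point of care is keeping the matching of these boundary elements --- and all the absorption and priority conventions of the type I and type II definitions --- consistent.
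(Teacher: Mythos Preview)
Your proposal is correct and follows essentially the same approach as the paper. The paper realizes $\top_Z,\bot_Z$ concretely as $(\top_L,\top_B),(\bot_L,\top_B)$ so that the two universes literally coincide, while you write out the corresponding bijection $\Phi$ explicitly; this is a cosmetic difference. Your well-definedness argument (the pointwise equivalence ``successor of $(a,y)$ lands in $H\times L_{gr}$ iff successor of $y$ in $L$ exists and lies in $L_{gr}$'') is in fact a bit cleaner than the paper's, which takes a small detour through showing $L_{gr}$ must be infinite before arriving at the same element-by-element conclusion.
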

\begin{proof}
By definition, the left-hand side is well-defined if and only if 

(i)
$H\times L_{gr}$ is discretely embedded into $
(H\times L)\cup(A\times\{\bot_L\})$.
\\
The right-hand side is well-defined if and only if 

(ii)
$L_{gr}$ is discretely embedded into $L$.
\\
Clearly, (ii) implies (i).
Now, assume (i).
Then $L_{gr}$ cannot be finite. Indeed, if $L_{gr}$ were finite then by taking its largest element $l\in L_{gr}$, 
the element $(1_H,l)_\uparrow$ must be greater than $(1_H,l)$ since $H\times L_{gr}$ is discretely embedded into $(H\times L)\cup(A\times\{\bot_L\})$.
Therefore, $(1_H,l)_\uparrow$ is either equal to $(1_H,l_\uparrow)$ which is not in $H\times L_{gr}$ since $l$ was chosen the greatest element in $L_{gr}$,
or equal to $(1_H,\top)$ which is not in $H\times L_{gr}$ either.
Thus, $L_{gr}$ is infinite, and hence for any $(h,l)\in H\times L_{gr}$ it holds true that $(h,l)_\uparrow=(h,l_\uparrow)\in H\times L_{gr}$ and $(h,l)_\downarrow=(h,l_\downarrow)\in H\times L_{gr}$, that is, (ii) holds.

Denote $\mathbf C=\PLPI{\mathbf A}{\mathbf H}{\mathbf L}$ for short.
By definition, 
$C=(H\times (L\cup\{\top_L\})\cup(A\times\{\bot_L\})$,
therefore the universe of the left-hand side is
$$
(C_{gr}\times B)\cup(C\times \{\top_{B}\})
=
$$
$$
[H\times L_{gr}\times B]
\cup
[H\times L\times \{\top_{B}\}]
\cup
[H\times \{\top_L\}\times \{\top_{B}\}]
\cup
[A\times\{\bot_L\}\times \{\top_{B}\}]
$$
On the other hand, the universe of $\PLPII{\mathbf L}{\mathbf B}$ is 
$
(L_{gr}\times B)\cup(L\times\{\top_B\})
$.
Let $\top_L$, $\bot_L$, and $\top_B$ be the new top, bottom, and top element added to $L$, $L$, and $B$, respectively as in item A of Definition~\ref{FoKonstrukcio}. 
Then it is easily verified that $(\top_L,\top_B)$ and $(\bot_L,\top_B)$ satisfy the requirements of Definition~\ref{FoKonstrukcio} to be the new top and bottom elements of $\PLPII{\mathbf L}{\mathbf B}$. 
Hence the universe of the right-hand side is
$$
[H\times L_{gr}\times B]
\cup
[H\times L\times \{\top_{B}\}]
\cup
[H\times \{\top_L\}\times \{\top_{B}\}]
\cup
[A\times\{\bot_L\}\times \{\top_{B}\}]
$$
so the underlying universes of the two sides coincide.
Clearly, the unit elements are the same. 
Since the monoidal operation of a partial lexicographic product is defined coordinatewise, the respective monoidal operations coincide, too.
Since both algebras are residuated and the monoidal operation uniquely determines its residual operation, it follows that the residual operations coincide, too, hence so do the residual complements.
\qed\end{proof}
If the underlying universes of two odd FL$_e$-chains $\mathbf X$, $\mathbf Y$ are order isomorphic then we will denote it by  $\mathbf X\simeq_o \mathbf Y$

\begin{lemma}\label{topbotJOLESZ}
Let $\mathbf A$ and $\mathbf D$ be odd FL$_e$-chains, $
\mathbf H\leq\mathbf A_\mathbf{gr}$.
The following statements are equivalent.
\begin{enumerate}
\item
$\PLPI{\mathbf A}{\mathbf H}{\mathbf D}\simeq_o\mathbb{R}$,
\item
$\mathbf A\simeq_o\mathbb{R}$, $\mathbf D\simeq_o\mathbb{R}$, and $\mathbf H$ is countable.
\end{enumerate}
\end{lemma}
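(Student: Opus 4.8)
The plan is to analyze the lexicographic order of the universe $\PLPI{A}{H}{D} = (H\times (D\cup\{\top_D\}))\cup(A\times\{\bot_D\})$ and exploit the fact that being order-isomorphic to $\mathbb R$ is equivalent to being a nonempty, countable, dense linear order without endpoints that is Dedekind-complete (Cantor's characterization plus completeness). I would first prove the easy direction $(2)\Rightarrow(1)$: if $\mathbf A\simeq_o\mathbb R$, $\mathbf D\simeq_o\mathbb R$, and $H$ is countable, then $H\times(D\cup\{\top_D\})$ with lexicographic order is a countable union of copies of $D\cup\{\top_D\}$ indexed by the countable set $H$; each copy $\{h\}\times D$ is isomorphic to $\mathbb R$ and sits as an open interval, with $(h,\top_D)$ being its "ceiling" which, since $H\leq\mathbf A_\mathbf{gr}\leq\mathbf A_\mathbf{gr}$ and $A\simeq_o\mathbb R$ forces $H$ to be dense-in-itself or else behave nicely — here the key point is that $A\setminus A_{gr}$ is nonempty (as $A\simeq_o\mathbb R$ but $H$ countable, or more to the point $A_{gr}\supseteq H$ and the remainder $A\times\{\bot_D\}$ glues things), and the whole order is countable, without endpoints, and densely ordered; Dedekind-completeness must be checked by hand on cuts, distinguishing cuts landing inside some $\{h\}\times D$ (completeness of $D\simeq_o\mathbb R$), cuts at a "seam" between consecutive-in-the-glued-sense blocks (handled because $A\simeq_o\mathbb R$ has no gaps, so the $A\times\{\bot_D\}$ layer fills them), and cuts cofinal/coinitial in some block.

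For the harder direction $(1)\Rightarrow(2)$, I would argue by contraposition in three parts. \emph{Countability of $H$:} if $H$ were uncountable, then $\PLPI{A}{H}{D}$ contains $H\times\{\bot_D\}$ (or a suborder order-isomorphic to $H$), which would be uncountable, contradicting $\PLPI{A}{H}{D}\simeq_o\mathbb R$; so $H$ must be countable, and likewise $A$ and $D$ must be countable since they embed (order-theoretically) into the product. \emph{$\mathbf D\simeq_o\mathbb R$:} the block $\{1_H\}\times D$ is an interval of $\PLPI{A}{H}{D}$ order-isomorphic to $D$; an interval of $\mathbb R$ that is order-isomorphic to an odd $\mathrm{FL}_e$-chain (hence has no endpoints, being a group-like structure — actually $D$ has no top or bottom since it is an odd $\mathrm{FL}_e$-chain with residual complement) is itself order-isomorphic to $\mathbb R$; I need to confirm $D$ has neither greatest nor least element, which follows from $\mathbf D$ being an odd $\mathrm{FL}_e$-chain (if it had a top $m$, then $m\te m = m$ would make $m$ idempotent and one derives a contradiction with the group-like structure, or more simply: a bounded odd $\mathrm{FL}_e$-chain is trivial only if it is a single point, but a single point $D$ is excluded or handled separately). \emph{$\mathbf A\simeq_o\mathbb R$:} here I use Proposition~\ref{TUTTItype12}(3): if $\mathbf A$ had a gap outside its group part, then so would $\PLPI{A}{H}{D}$, contradicting $\PLPI{A}{H}{D}\simeq_o\mathbb R$ (which is gap-free); combined with $A$ countable, and with the fact that $A$ has no endpoints (odd $\mathrm{FL}_e$-chain), and that $A$ is densely ordered — density of $A$ also transfers from density of $\PLPI{A}{H}{D}$ restricted appropriately — we get $\mathbf A\simeq_o\mathbb R$.

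The main obstacle I anticipate is the careful bookkeeping of Dedekind-completeness (the "no gaps" property) in the $(2)\Rightarrow(1)$ direction at the seams, i.e., showing that a Dedekind cut of the glued order $\PLPI{A}{H}{D}$ always has a supremum: one must show that a cut which is "above all of $\{h\}\times (D\cup\{\top_D\})$ for $h$ in some downward-closed $H'\subseteq H$ and below the rest" is realized, and this is exactly where one needs $A$ (not just $H$) to be complete and gap-free, because the element realizing the cut may live in the $A\times\{\bot_D\}$ layer at a point of $A\setminus H$. The symmetric subtlety — ruling out new endpoints — is easy since neither layer contributes a global max or min. A secondary, smaller obstacle is handling degenerate cases (e.g. if $D$ or $A$ is a one-element chain, or $H$ is trivial), which I would dispatch at the outset by noting an odd $\mathrm{FL}_e$-chain order-isomorphic to $\mathbb R$ cannot be trivial and, conversely, a trivial factor makes the left-hand side fail $\simeq_o\mathbb R$ (e.g. if $\mathbf A$ is trivial then $\PLPI{A}{H}{D}$ has a top element $(1_H,\top_D)$).
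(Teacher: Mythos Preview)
Your proof has a fundamental error in the order-theoretic characterization of $\mathbb R$: you write that being order-isomorphic to $\mathbb R$ means being a ``nonempty, countable, dense linear order without endpoints that is Dedekind-complete.'' But $\mathbb R$ is uncountable; Cantor's theorem characterizes $\mathbb Q$, not $\mathbb R$. The correct characterization of $\mathbb R$ (up to order isomorphism) is: a nonempty, Dedekind-complete, densely ordered set without endpoints that is \emph{separable}, i.e., has a countable dense subset.

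This error propagates through both directions. In $(2)\Rightarrow(1)$ you aim to verify that the product is countable, which it is not (indeed $A\simeq_o\mathbb R$ already makes the $A\times\{\bot_D\}$ layer uncountable). More seriously, in $(1)\Rightarrow(2)$ your argument for the countability of $H$ fails outright: you claim that if $H$ were uncountable then $\PLPI{A}{H}{D}$ would contain an uncountable subset $H\times\{\bot_D\}$, contradicting $\simeq_o\mathbb R$ --- but $\mathbb R$ has plenty of uncountable subsets, so this is no contradiction. The same mistake recurs when you assert that $A$ and $D$ must be countable. The paper's argument for the countability of $H$ is genuinely different and uses separability: one picks a countable dense subset $Q_2$ of $C=\PLPI{A}{H}{D}$ and observes that for each $h\in H$ the fiber $\{h\}\times D$ is a nonempty open interval of $C$ (the interval strictly between $(h,\bot_D)$ and $(h,\top_D)$), hence must meet $Q_2$; this forces $H$ to inject into the countable set of first coordinates appearing in $Q_2$. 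The remaining properties of $A$ and $D$ (densely ordered, unbounded, Dedekind complete, separable) are verified directly from the structure of $C$, along lines broadly similar to what you sketch --- your idea of looking at the fiber $\{t_H\}\times D$ as an open interval is exactly right for $D$ --- once ``countable'' is replaced throughout by ``has a countable dense subset.''
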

\begin{proof}
Sufficiency has been proved in \cite[Proposition 8]{Jenei_IULfp}.
Denote $\mathbf C=\PLPI{\mathbf A}{\mathbf H}{\mathbf D}$ for short.
To prove the necessity, assume $\mathbf C\simeq_o\mathbb{R}$.

(i)
If $D$ had a least element $l$ then for some $h\in H$, $(h,\bot)<(h,l)$ would make a gap in $C$, a contradiction. An analogous argument shows that $D$ cannot have a greatest element either.
Next, if $A$ had a least or a greatest element ($l$ or $g$) then $(l,\bot)$ or $(g,\bot)$ would be the least or the greatest element of $C$, a contradiction.

(ii)
If $D$ is not densely ordered then there exists a gap $a<b$ in $D$. Then for $h\in H$, $(h,a)<(h,b)$ is a gap in $C$, a contradiction.
If $A$ is not densely ordered then there exists a gap $a<b$ in $A$. Then
$(a,\top)<(b,\bot)$ is a gap in $C$ when $a\in H$, and
$(a,\bot)<(b,\bot)$ is a gap in $C$ when $a\in A\setminus H$, contradiction.

(iii)
Let $Q_2$ be a countable and dense subset of $C$.

We prove that $Q:=\{ v_1\in A \ | \ (v_1,v_3)\in Q_2 \}$ is a countable and dense subset of $A$.
Indeed, $Q$ is clearly nonempty and countable, too, since so is $Q_2$.
To show that $Q$ is a dense subset of $A$, let $a\in A\setminus Q$ arbitrary.
Take an arbitrary open interval $]b,c[$ containing $a$.
Since $Q_2$ is a dense subset of $C$, we can choose an element $(r,s)\in Q_2$ such that $(b,\bot)<(r,s)<(c,\bot)$ if $b\in A\setminus H$, or we can choose $(r,s)\in Q_2$ such that $(b,\top)<(r,s)<(c,\bot)$ if $b\in H$.
In both cases, $r\in Q$ and $b<r<c$ holds, so we are done.

Next, we prove that $H$ is countable.
It suffices to prove $H\subseteq Q$ since $Q$ is a countable.
Assume there exists $h\in H\setminus Q$.
Then $\{h\}\times(D\cup\{\top,\bot\})\subset C\setminus Q_2$ would follow, showing that there is no element in $Q_2$ between $(h,\bot)$ and $(h,\top)$. However there should be, since $D$ is nonempty.
It is a contradiction to $Q_2$ being a dense subset of $C$.


Finally, we prove that $Q_3:=\{ v_3\in D \ | \ (v_1,v_3)\in Q_2 \}$ is a countable and dense subset of $D$.
Indeed, $Q_3$ is clearly nonempty and countable, too, since so is $Q_2$.
To show that $Q_3$ is a dense subset of $D$, let $a_3\in D\setminus Q_3$ arbitrary.
Take an arbitrary open interval $]b_3,c_3[$ containing $a_3$, and let $h\in H$.
Since $Q_2$ is a dense subset of $C$
and $(h,b_3),(h,c_3)\in C$, we can choose an element $(h,s_3)\in Q_2$ such that 
$(h,b_3)<(h,s_3)<(h,c_3)$. Thus $s_3\in Q_3$ follows and $b_3<s_3<c_3$ holds, so we are done.

(iv)
To prove that $A$ is Dedekind complete we proceed as follows.
Take any nonempty subset of $V\subseteq A$ which has an upper bound $b\in A$.
Then $V_2=\{(v,\bot) \ | \ v\in V\}$ is a nonempty subset of $C$ which has an upper bound $(b,\bot)\in C$. Since $C$ is Dedekind complete, there exists the supremum $(m,m_3)$ of $V_2$ in $C$.
Because the second coordinate of any element of $V_2$ is $\bot$, it follows that when $(m,m_3)$ is an upper bound of $V_2$ then also $(m,\bot)$ is an upper bound of it. Therefore, $(m,\bot)\in C$ is the supremum of $V_2$, and hence $m$ is the supremum of $V$.

To prove that $D$ is Dedekind complete we proceed as follows.
Take any nonempty subset of $V_3\subseteq D$ which has an upper bound $b_3\in D$.
Choose an element $h$ from $H$. 
Then $V_2=\{(h,v_3) \ | \ v_3\in V_3\}$ is a nonempty subset of $C$ which has an upper bound $(h,b_3)\in C$. Since $C$ is Dedekind complete, there exists the supremum $(m,m_3)$ of $V_2$ in $C$.
Clearly, $(m,m_3)\leq(h,b_3)$ holds, therefore, $m=h$ follows, and since $(h,m_3)$ is the supremum of $V_2$, it follows that $m_3$ is the supremum of $V_3$.
\qed\end{proof}

\color{black}
\begin{lemma}\label{sdhajkdhasgGGG}
Let $\mathbf A$, $\mathbf B$ and $\mathbf L$ be odd FL$_e$-chains, $
\mathbf H\leq\mathbf A_\mathbf{gr}$. 
The following statements are equivalent.
\begin{enumerate}
\item
$
\PLPII{(\PLPI{\mathbf A}{\mathbf H}{\mathbf L})}{\mathbf B}
$
is well-defined and $\simeq_o\mathbb{R}$
\item
\begin{itemize}
\item
$\mathbf A\simeq_o\mathbb R$ and $\mathbf B\simeq_o\mathbb R$,
\item
$H$ and $L_{gr}$ are countable, 
\item
$L$ is Dedekind complete, has a countable dense subset, and has neither least nor greatest element, 
\item
$L_{gr}$ is discretely embedded into $L$,
and 
\item
there exists no gap in $L$ formed by two elements of $L\setminus L_{gr}$.
\end{itemize}
\end{enumerate}
\end{lemma}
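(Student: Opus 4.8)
The plan is to split the two nested constructions apart with the ``associativity'' identity of Lemma~\ref{Vegyes}, then to absorb the remaining outer type~I product into the characterization already available in Lemma~\ref{topbotJOLESZ}; the only genuinely new ingredient will be an analogue of Lemma~\ref{topbotJOLESZ} for a single type~II product. Concretely, I would first invoke Lemma~\ref{Vegyes}: the left-hand side $\PLPII{(\PLPI{\mathbf A}{\mathbf H}{\mathbf L})}{\mathbf B}$ is well-defined exactly when $\PLPI{\mathbf A}{\mathbf H}{(\PLPII{\mathbf L}{\mathbf B})}$ is, and then the two are isomorphic; reading off the proof of Lemma~\ref{Vegyes}, both are well-defined precisely when $L_{gr}$ is discretely embedded into $L$, i.e.\ $\mathbf L_\mathbf{gr}\leq_d\mathbf L$. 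So from now on I may assume $\mathbf L_\mathbf{gr}\leq_d\mathbf L$ and work with $\PLPI{\mathbf A}{\mathbf H}{(\PLPII{\mathbf L}{\mathbf B})}$; in particular $\PLPII{\mathbf L}{\mathbf B}$ is then a well-defined odd FL$_e$-chain by item B of Definition~\ref{FoKonstrukcio} and Theorem~\ref{SubLexiTheo} (it is a chain since $L$ and $B$ are).

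Next I would apply Lemma~\ref{topbotJOLESZ} with $\mathbf D:=\PLPII{\mathbf L}{\mathbf B}$, obtaining that $\PLPI{\mathbf A}{\mathbf H}{(\PLPII{\mathbf L}{\mathbf B})}\simeq_o\mathbb R$ if and only if $\mathbf A\simeq_o\mathbb R$, $\PLPII{\mathbf L}{\mathbf B}\simeq_o\mathbb R$, and $H$ is countable. Everything then reduces to the core claim: assuming $\mathbf L_\mathbf{gr}\leq_d\mathbf L$, one has $\PLPII{\mathbf L}{\mathbf B}\simeq_o\mathbb R$ if and only if $\mathbf B\simeq_o\mathbb R$, $L_{gr}$ is countable, $L$ is Dedekind complete, has a countable dense subset and has neither least nor greatest element, and no gap in $L$ is formed by two elements of $L\setminus L_{gr}$. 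Granting the claim and chaining the two reductions above (and recalling that $\mathbf L_\mathbf{gr}\leq_d\mathbf L$ is exactly the well-definedness condition) yields precisely the list in statement~(2), which finishes the proof.

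To prove the core claim I would imitate the proof of Lemma~\ref{topbotJOLESZ}, using that the universe $(L_{gr}\times B)\cup(L\times\{\top_B\})$ of $\PLPII{\mathbf L}{\mathbf B}$, with the lexicographic order, is obtained from $L$ by blowing up each $l\in L_{gr}$ into a copy of the top-bounded chain $B\cup\{\top_B\}$ and keeping each $l\in L\setminus L_{gr}$ as the single point $(l,\top_B)$. For necessity I would pull the order-theoretic features of $\mathbb R$ back: a least or greatest element of $B$ or of $L$ forces a least or greatest element, or — since $\mathbf L_\mathbf{gr}\leq_d\mathbf L$ makes every group point have a genuine successor and predecessor, both again in $L_{gr}$ — a gap, in the product; a gap in $B$, or a gap in $L$ between two non-group points, survives as a gap in the product; an uncountable $L_{gr}$ yields an uncountable family of pairwise disjoint nonempty open intervals $\{l\}\times B$ and hence no countable dense subset; and failure of Dedekind completeness of $B$ or of $L$ transfers upward. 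For sufficiency I would use the classical characterization of the order type of $\mathbb R$ (a nonempty, densely ordered, Dedekind complete chain without endpoints possessing a countable dense subset): under the hypotheses the product has no endpoints; its only possible gaps would sit between consecutive group points of $L$, and there they are filled because $B$ has no endpoints, while a gap between consecutive non-group points is excluded by hypothesis; and a countable dense subset is built from a countable dense subset of $L$ (put into the $\top_B$-layer) together with, over each of the countably many points of $L_{gr}$, a countable dense subset of $B$.

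The step I expect to be the main obstacle is Dedekind completeness of $\PLPII{\mathbf L}{\mathbf B}$ under those hypotheses. Given nonempty $S\subseteq\PLPII{\mathbf L}{\mathbf B}$ bounded above, let $a\in L$ be the supremum of the first coordinates occurring in $S$. If $a\notin L_{gr}$, then $(a,\top_B)$ is the only element with first coordinate $a$, and one checks it is the least upper bound; if $a\in L_{gr}$ is attained as a first coordinate of $S$, the corresponding second coordinates have a supremum in the top-bounded Dedekind complete chain $B\cup\{\top_B\}$, giving the least upper bound. The remaining case, $a\in L_{gr}$ being a strict limit from below, must be shown not to occur: this is exactly where the hypotheses ``$L$ Dedekind complete'' and ``no gap in $L$ between two non-group points'' are used together with $\mathbf L_\mathbf{gr}\leq_d\mathbf L$ — the $L$-supremum of a set of group points that has no maximum inside $L_{gr}$ cannot have a predecessor, hence cannot lie in $L_{gr}$, so it lies in $L\setminus L_{gr}$, reducing to the first case (and were it not for these hypotheses the set of upper bounds $\{(a,d):d\in B\cup\{\top_B\}\}\cup\{(c,\cdot):c>a\}$ would have no least element, since $B$ has none). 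Once this interplay is settled, the rest of the claim, and hence the lemma, follows by the routine verifications sketched above.
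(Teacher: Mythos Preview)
Your argument is correct and takes a genuinely different, more modular route than the paper. The paper proves necessity by working directly in the three-layer product $\mathbf D=\PLPII{(\PLPI{\mathbf A}{\mathbf H}{\mathbf L})}{\mathbf B}$, extracting each bullet of~(2) by hand (no endpoints, density, Dedekind completeness, separability for $A$, $B$, and $L$ separately, then countability of $H$ and $L_{gr}$, etc.), and simply cites sufficiency from \cite{Jenei_IULfp}. You instead invoke Lemma~\ref{Vegyes} to rewrite the object as $\PLPI{\mathbf A}{\mathbf H}{(\PLPII{\mathbf L}{\mathbf B})}$, observe that well-definedness on either side is exactly $\mathbf L_{\mathbf{gr}}\leq_d\mathbf L$, and then apply Lemma~\ref{topbotJOLESZ} to peel off $\mathbf A$ and $\mathbf H$, reducing everything to a clean ``type~II analogue'' of Lemma~\ref{topbotJOLESZ}: under $\mathbf L_{\mathbf{gr}}\leq_d\mathbf L$, characterize when $\PLPII{\mathbf L}{\mathbf B}\simeq_o\mathbb R$. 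This buys you reuse of two already-proved lemmas, a self-contained proof of both directions (rather than citing sufficiency), and an intermediate statement that is of independent interest. The paper's direct approach, by contrast, is independent of Lemma~\ref{Vegyes} and keeps all the work in one place. One small wrinkle in your Dedekind-completeness case analysis: the set of first coordinates of $S$ need not consist of group points, so your phrase ``the $L$-supremum of a set of group points'' is inaccurate; but your actual mechanism is fine, since any unattained supremum $a\in L_{gr}$ has $a_\downarrow<a$ by $\mathbf L_{\mathbf{gr}}\leq_d\mathbf L$, and $a_\downarrow$ is then a smaller upper bound regardless of whether the first coordinates lie in $L_{gr}$.
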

\begin{proof}
Sufficiency has been proved in \cite[Proposition 6]{Jenei_IULfp}.
Denote $\mathbf D=\PLPII{(\PLPI{\mathbf A}{\mathbf H}{\mathbf L})}{\mathbf B}$ for short.
To prove the necessity, assume that $\mathbf D$ is well-defined and is $D\simeq_o\mathbb{R}$.
\begin{itemize}
\item
First we prove that $\mathbf A\simeq_o\mathbb R$ and $\mathbf B\simeq_o\mathbb R$.

- Since partial lexicographic products clearly inherit the boundedness of their first component, and since $D$ has neither least nor greatest element, it follows that $C$, and in turn, $A$ has neither least nor greatest element.
If $B$ had a greatest element $g$ then, since $t_{C}\in C_{gr}$, it would yield $(t_{C},g)<(t_{C},\top_{B})$ be a gap in $D$, a contradiction. 
Since $B$ is involutive, it cannot have a least element either, because then it would have a greatest one, too.

- We prove that $A$ and $B$ are densely ordered.
Assume $A$ isn't. Then there exists a gap $a<b$ in $A$.
If $a\in H$ then $(a,\top_L)<(b,\bot_L)$ is a gap in $C$, whereas if $a\in A\setminus H$ then $(a,\bot_L)<(b,\bot_L)$ is a gap in $C$.
In all cases, there is a gap $c<d$ in $C$ such that $c,d\in C\setminus C_{gr}$, thus yielding a gap $(c,\top_{B})<(d,\top_{B})$ in $D$, a contradiction.
If $B$ were not densely ordered witnessed by a gap $a_4<b_4$ then for  $t_{C}\in C_{gr}$, $(t_{C},a_4)<(t_{C},b_4)$ would be a gap in $D$, a contradiction.

- Next we prove that $A$ and $B$ are Dedekind complete.
First assume $B$ isn't. Then there exists a nonempty subset $X_4$ of $B$ bounded above by $b_4\in B$ such that $X$ does not have a supremum in $B$.
Then for any $a_2\in C_{gr}$, the set $\{(a_2,x) \ | \ x\in X_4\}\subseteq D$ is nonempty, it is bounded from above by $(a_2,b_4)$, and it does not have a supremum in $D$, a contradiction.
Second, we assume that $A$ is not Dedekind complete, that is, there exists a nonempty subset $X$ of $A$ bounded above by $b\in A$ such that $X$ does not have a supremum in $A$.
Let $X_3=X\times\{\bot_L\}\times\{ \top_{B} \}$. Then $\emptyset\not=X_3\subseteq D$, $X_3$ is bounded from above by $(b,\bot_L, \top_{B})$.
Let $(c,d,e)\in D$ be an upper bound of $X_3$.
Clearly, $(c,\bot_L,\top_{B})$ is an upper bound of $X_3$, too, hence $c\in A$ is an upper bound of $X$.
Therefore, there exists $A\ni s<c$ such that also $s$ is an upper bound of $X$. Thus $(s,\bot_L,\top_{B})<(c,\bot_L,\top_{B})$ is an upper bound of $X_3$, too, showing that $D$ is not Dedekind complete, a contradiction.

- We prove that both $A$ and $B$ have a countable and dense subset.
Let $D_3$ be a countable and dense subset of $D$.
Let $D=\{ a \ | \ (a,l,a_4)\in D_3 \}$. We claim that $D$ is a countable and dense subset of $A$.
Indeed, $D$ is nonempty and countable, since so is $D_3$. Assume that there exists $d\in A\setminus D$ such that $d$ is not an accumulation point of $D$.
Since $A$ has neither least nor greatest element, it follows that there exists $b,c\in A$ such that $b<d<c$ and there is no element of $D$ in between $b$ and $c$.
Then it follows that $(d,\bot_L,\top_{B})\in D\setminus D_3$ is not an accumulation point of $D_3$, as shown by the neighborhood $D\ni(b,\bot_L,\top_{B})<(d,\bot_L,\top_{B})<(c,\bot_L,\top_{B})\in D$, a contradiction to $D_3$ being a dense subset of $D$.
Next, we claim that $D_4=\{  a_4\in B \ | \ (t_{C},a_4)\in D_3 \}$ is a countable and dense subset of $B$.
Indeed, $D_4$ is countable, since so is $D_3$.
$D_4$ is nonempty, since if for any $a_4\in B$, $(t_{C},a_4)\notin D_3$ then 
using that $B$ has neither least nor greatest element and thus $B$ is infinite, it follows that there exists $s,v,w\in B$ such that $(t_{C},s)<(t_{C},v)<(t_{C},w)$, showing that $(t_{C},v)\in D\setminus D_3$ is not and accumulation point of $D_3$, a contradiction.
Assume that there exists $d_4\in B\setminus D_4$ such that $d_4$ is not an accumulation point of $D_4$.
Since $B$ has neither least nor greatest element, it follows that there exists $b_4,c_4\in B$ such that $b_4<d_4<c_4$ and there is no element of $D_4$ in between $b_4$ and $c_4$.
Then it follows that $(t_{C},d_4)\in D\setminus D_3$ is not an accumulation point of $D_3$, witnessed by the neighborhood $D\ni(t_{C},b_4)<(t_{C},d_4)<(t_{C},c_4)\in D$, a contradiction to $D_3$ being a dense subset of $D$.
\item
We prove that $H$ and $L_{gr}$ are countable.
Assume than any of them isn't. Then, since $C_{gr}=H\times L_{gr}$, it follows that $C_{gr}$ is uncountable, too.
In the preceding item, we proved that $D_4$ is nonempty.
In complete analogy, we can prove that for any $c_2\in C_{gr}$, $D_{c_2}=\{  a_4\in B \ | \ (c_2,a_4)\in D_3 \}$ is nonempty either.
But it means that for any $c_2\in C_{gr}$, there is an element $(c_2,y_{c_2})$ in $D_3$.
Since $c_2\mapsto(c_2,y_{c_2})$ is injective, it follows that $D_3$ is uncountable, a contradiction.
\item
Finally we prove the statements about $L$.

- Since $\PLPII{(\PLPI{\mathbf A}{\mathbf H}{\mathbf L})}{\mathbf B}$ is well-defined, $\left(\PLPI{A}{H}{L}\right)_{gr}=H\times L_{gr}$ is discretely embedded into $(H\times (L\cup\top_L))\cup(A\times\{\bot_L\})$.
Let $l\in L_{gr}$ be arbitrary. Then $H\times L_{gr}\ni(t_H,l)_\uparrow$ cannot be $(t_H,\top_L)$ since it is not in $H\times L_{gr}$. Therefore, $(t_H,l)_\uparrow$ is equal to $(t_H,l_\uparrow)$ and it is in $H\times L_{gr}$. Thus, $l_\uparrow\in L_{gr}$. Summing up, $L_{gr}$ is discretely embedded into $L$.

- If $L$ had a greatest element $g$ then $(t_H,g,\top_{B})<(t_H,\top_L,\top_{B})$ were a gap in $D$, a contradiction.
Since $L$ is involutive, it cannot have a least element either, because then it would have a greatest one, too.

- Next we prove that $L$ is Dedekind complete.
Let an arbitrary $\emptyset\not=L_1\subset L$ be bounded above by $l\in L$.
Then $\{(t_H,l_1,\top_{B})\ | \ l_1\in L_1\}\subset D$ is nonempty, it is bounded from above by $(t_H,l,\top_{B})$, and since $D$ is Dedekind complete, there exists a supremum $(x,y,z)$ of it in $D$.
Clearly, $x=t_H$ and for any $l_1\in L_1$, $\l_1\leq y\leq l$ holds. The latest implies $y\in L$.
But then $y$ is the supremum of $L_1$. Indeed, if for any $l_1\in L_1$, $\l_1\leq z< y$ would hold then $(t_H,z,\top_{B})$ would also be an upper bound of $L_1$, a contradiction.

- If there were a gap $l_1<l_2$ in $L$ formed by two non-invertible elements then $D\supset H\times (L\setminus L_{gr})\times \{\top_{B}\}\ni (t_H,l_1,\top_{B})<(t_H,l_2,\top_{B})\in H\times (L\setminus L_{gr})\times \{\top_{B}\}\subset D$ would be a gap in $D$, a contradiction.

- Let $D_L=\{ l\in L \ | \ (a,l,a_4)\in D_3 \}$.
We prove that $D_L$ is a countable and dense subset of $L$. 
Indeed, $D_L$ is clearly countable since so is $D_3$, and $D_L\subseteq [H\times L_{gr}\times (B\cup \{\top_{B}\})]
\cup
[H\times (L\setminus L_{gr})\times \{\top_{B}\}]$ holds.
Assume that there is $l_1\in D_L\setminus L$ such that $l_1$ is not an accumulation point of $D_L$.
Since $L$ has neither least nor greatest element, there is $s,v\in L$ such that $s<l_1<v$ and there is no element of $D_L$ strictly in between $s$ and $v$.
If $l_1\in L_{gr}$ then choose $a,b,c\in B$ such that $a<b<c$\,;
then $(t_H,l_1,b)\in D$ is not an accumulation point of $D_3$ witnessed by its neighborhood 
$D\ni (t_H,l_1,a)<(t_H,l_1,c)\in D$, a contradiction.
Hence we can assume $l_1\in L\setminus L_{gr}$. 
If $v\in L\setminus L_{gr}$ then there exists $w\in L$ such that $l_1<w<v$ since there exists no gap in $L$ formed by two elements of $L\setminus L_{gr}$, 
whereas 
if 
$v\in L_{gr}$ then $w:=v_\downarrow<v$ holds since $L_{gr}$ is discretely embedded into $L$, and $L\setminus L_{gr}\ni l_1\not=v_\downarrow\in L_{gr}$.
In both cases $s<l_1<w<v$ follows.
Therefore, $(t_H,l_1,\top_{B})$ is not an accumulation point of $D_3$ witnessed by its neighborhood 
$D\ni (t_H,s,\top_{B})<(t_H,w,\top_{B})\in D$, a contradiction.
\end{itemize}
\qed\end{proof}


\begin{lemma}\label{EZaz_Z}
Any linearly ordered abelian group $\mathbf G$ which is 
Dedekind complete and satisfies $x_\downarrow<x<x_\uparrow$ 
is isomorphic 
(qua an FL$_e$-algebra)
to $\mathbb Z$.
\end{lemma}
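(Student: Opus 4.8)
The plan is to prove that such a $\mathbf G$ must be infinite cyclic, generated (as an ordered group) by the successor of its unit. I would start by fixing notation: let $0$ denote the group unit (which equals $t=f$) and put $1:=0_\uparrow$, so $1>0$ by hypothesis. Since in a linearly ordered abelian group every translation $x\mapsto x+a$ is an order automorphism, it carries the least element above $x$ to the least element above $x+a$; applying this at $x=0$ gives $a_\uparrow=a+1$, and dually $a_\downarrow=a-1$, for every $a\in G$. Consequently the $n$-fold sum $n\cdot1$ makes sense for all $n\in\mathbb Z$, the map $\iota\colon\mathbb Z\to G$, $\iota(n)=n\cdot1$, is a group homomorphism, and it is order-preserving because $n\cdot1>0$ for $n>0$ (a sum of positive elements) and $n\cdot1<0$ for $n<0$; in particular $\iota$ is injective. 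Since on any linearly ordered abelian group the lattice operations are $\min$ and $\max$ and the residuum is $\res{\te}{x}{y}=y-x$, an isomorphism of ordered abelian groups is automatically an isomorphism of FL$_e$-algebras, so it remains only to show that $\iota$ is surjective.

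For surjectivity I would argue by contradiction. Suppose $g\in G\setminus\iota(\mathbb Z)$. Then $g\neq0$ (as $0=\iota(0)$), and since $x\mapsto-x$ is an order-reversing automorphism of $\mathbf G$ carrying the subgroup $\iota(\mathbb Z)$ onto itself, we may assume $g>0$. Put $S:=\{\,n\cdot1\ :\ n\in\mathbb Z,\ n\cdot1<g\,\}$; it is nonempty (it contains $0$) and bounded above by $g$, so by Dedekind completeness it has a supremum $s\in G$. Because $s$ is the \emph{least} upper bound, $s_\downarrow=s-1$ is not an upper bound of $S$, so there is $n\cdot1\in S$ with $n\cdot1>s-1$; as $(s-1)_\uparrow=s$, this forces $n\cdot1\geq s$, and together with $n\cdot1\leq s$ we get $n\cdot1=s$, i.e. $s=\max S<g$. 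Then $s_\uparrow=(n+1)\cdot1>s$, so, $s$ being the maximum of $S$, we have $(n+1)\cdot1\notin S$, i.e. $(n+1)\cdot1\geq g$, and this inequality is strict since $g\notin\iota(\mathbb Z)$. Thus $n\cdot1<g<(n\cdot1)_\uparrow$, contradicting that $(n\cdot1)_\uparrow$ is by definition the least element of $G$ above $n\cdot1$. Hence $\iota$ is onto and $\mathbf G\simeq\mathbb Z$.

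The only nonroutine point — and the step I expect to be the crux — is this final contradiction: discreteness of the order has to be combined with Dedekind completeness to manufacture an element lying strictly inside a gap. Dedekind completeness is genuinely needed here, as $\mathbb Z\lex\mathbb Z$ shows: it is discretely ordered and satisfies $x_\downarrow<x<x_\uparrow$, yet it is not isomorphic to $\mathbb Z$, and it fails Dedekind completeness precisely because $\{0\}\times\mathbb Z$ is bounded above with no supremum. Everything else in the argument reduces to standard bookkeeping about linearly ordered abelian groups.
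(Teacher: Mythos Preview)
Your proof is correct, and it takes a genuinely different route from the paper's. The paper first shows that $\mathbf G$ is archimedean (using Dedekind completeness to derive a contradiction from a bounded sequence of powers), then invokes H\"older's embedding theorem to put $\mathbf G$ inside $\mathbb R$, and finally observes that the discreteness hypothesis forces the image to be a cyclic subgroup, hence isomorphic to $\mathbb Z$. Your argument is more direct and entirely self-contained: you build the candidate isomorphism $\iota\colon\mathbb Z\to G$ explicitly from the successor of the unit, and use Dedekind completeness only once, to show $\iota$ is onto. This avoids H\"older's theorem altogether, which is a real economy; the paper's route, on the other hand, makes the archimedean property explicit as an intermediate conclusion, which some readers may find conceptually illuminating. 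Your closing remark about $\mathbb Z\lex\mathbb Z$ nicely isolates why Dedekind completeness is indispensable in either approach.
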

\begin{proof}
First we prove that $G$ is archimedean.
If not then there exists $x,y\in G^+$ such that for any $n\in\mathbb N$, $a^{(n)}<b$. 
Then $X=\{  a^{(n)} \ | \ n\in\mathbb N \}$ is bounded from above by $b$, and hence, using that $G$ is Dedekind complete, it has a least upper bound $m$.

It holds true that $a^{(n)}$ is strictly increasing since $a\in G^+$:
$a^{n+1}=\g{a^n}{a}\geq\g{a^n}{1}=a^n$ holds since $a$ is positive and $\te$ is increasing in both arguments.
If $a^{n+1}$ were equal to $a^n$ for some $n\in\mathbb N$ then a cancellation by $a^n$ would imply $a=1$, a contradiction to $a>1$. 

We obtain $m\notin X$ since $a_n=m$ for some $n\in\mathbb N$ would then imply $a_{n+1}>m$, a contradiction to $m$ being an upper bound of $X$.
By the hypothesis $m_\downarrow<m$ holds, and we state that $m_\downarrow$ is an upper bound of $X$, too, otherwise there were an element $a_n\in X$ such that $a_n>m_\downarrow$, that is, $a_n\geq m$, that is, $a_n=m$ since $m$ is an upper bound of $X$, a contradiction.
This contradicts to $m$ being the least upper bound of $X$.
\\
By H\" older theorem, archimedean linearly ordered abelian groups are embeddable into $\mathbb R$. Invoking the last assumption of the lemma, it follows that $G$ is isomorphic to a discretely ordered subgroup of $\mathbb R$, hence it is isomorphic to $\mathbb Z$.
The isomorphism naturally extends to an isomorphism between the respective FL$_e$-algebras.
\qed\end{proof}

\begin{lemma}\label{AlappalIZO}
Let $\mathbf A$ be a linearly ordered abelian group, $\mathbf B$ be an odd FL$_e$-chain.
Then $\PLPII{\mathbf A}{\mathbf B}\simeq_o\mathbb R$ if and only if
(qua FL$_e$-algebras)
$\mathbf A\simeq\mathbb Z$ and $\mathbf B\simeq_o\mathbb R$.
\end{lemma}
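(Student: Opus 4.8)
The plan is to prove the two implications directly, the key inputs being Lemma~\ref{EZaz_Z} and the standard characterisation of $\mathbb{R}$ (already relied on in the proof of Lemma~\ref{topbotJOLESZ}) as the unique, up to order isomorphism, Dedekind complete densely ordered chain without endpoints that admits a countable dense subset. For sufficiency, assume $\mathbf{A}\simeq\mathbb{Z}$ and $\mathbf{B}\simeq_o\mathbb{R}$; we may take $\mathbf{A}=\mathbb{Z}$, which is discretely ordered, so $\PLPII{\mathbb{Z}}{\mathbf{B}}=\PLPIV{\mathbb{Z}}{\mathbb{Z}}{\mathbf{B}}$ is well-defined, with universe $\mathbb{Z}\times(B\cup\{\top\})$ under the lexicographic order by item B of Definition~\ref{FoKonstrukcio}. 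Since $B\simeq_o\mathbb{R}$ has no greatest element, $B\cup\{\top\}$ is order isomorphic to $\,]0,1]$, and then $(n,x)\mapsto n+x$ is an order isomorphism of the lexicographic product $\mathbb{Z}\times\,]0,1]$ onto $\bigcup_{n\in\mathbb{Z}}\,]n,n+1]=\mathbb{R}$; hence $\PLPII{\mathbf{A}}{\mathbf{B}}\simeq_o\mathbb{R}$.

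For necessity, write $\mathbf{C}=\PLPII{\mathbf{A}}{\mathbf{B}}$ and assume $\mathbf{C}\simeq_o\mathbb{R}$. By item B of Definition~\ref{FoKonstrukcio}, $C=A\times(B\cup\{\top\})$ under the lexicographic order, and since $\mathbf{C}=\PLPIV{\mathbf{A}}{\mathbf{A}}{\mathbf{B}}$ is well-defined, $A$ (being $A_{gr}$) is discretely embedded into itself, i.e.\ $x_\downarrow<x<x_\uparrow$ for all $x\in A$. I would first obtain $\mathbf{A}\simeq\mathbb{Z}$: for a nonempty $V\subseteq A$ bounded above by $b\in A$, the set $V\times(B\cup\{\top\})$ is bounded above by $(b,\top)$ in $\mathbf{C}$, hence has a supremum $(m,n)$ by Dedekind completeness of $\mathbf{C}$, and a short computation with the lexicographic order shows that $m$ is the least upper bound of $V$ in $A$ (it is an upper bound since $(v,\top)\le(m,n)$ forces $v\le m$; and no $p<m$ in $A$ bounds $V$, else $(p,\top)<(m,n)$ would bound $V\times(B\cup\{\top\})$). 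So $\mathbf{A}$ is a Dedekind complete linearly ordered abelian group with $x_\downarrow<x<x_\uparrow$, whence $\mathbf{A}\simeq\mathbb{Z}$ qua FL$_e$-algebra by Lemma~\ref{EZaz_Z}.

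It then remains to show $\mathbf{B}\simeq_o\mathbb{R}$ by verifying that $B$ is a Dedekind complete densely ordered chain without endpoints admitting a countable dense subset, using throughout that $t_A\in A_{gr}$. If $B$ had a greatest element $g$, then $(t_A,g)<(t_A,\top)$ would be a gap in $\mathbf{C}$, contradicting $\mathbf{C}\simeq_o\mathbb{R}$; since $\mathbf{B}$ is involutive, $B$ then has no least element either. A gap $a<b$ in $B$ would yield the gap $(t_A,a)<(t_A,b)$ in $\mathbf{C}$, so $B$ is densely ordered. For Dedekind completeness, a nonempty $X\subseteq B$ bounded above by $b\in B$ gives $\{t_A\}\times X\subseteq\mathbf{C}$ bounded above by $(t_A,b)$; its supremum $(m,n)$, squeezed between $(t_A,x_0)$ for any fixed $x_0\in X$ and $(t_A,b)$, has $m=t_A$ and $n\in B$, and then $n=\sup X$ in $B$. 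Finally, given a countable dense $Q\subseteq\mathbf{C}$, the set $Q_B=\{y\in B:(t_A,y)\in Q\}$ is countable and dense in $B$: for $y_1<y_2$ in $B$, an element of $Q$ strictly between $(t_A,y_1)$ and $(t_A,y_2)$ must, by the lexicographic order, be of the form $(t_A,z)$ with $y_1<z<y_2$ and $z\in B$. Hence $\mathbf{B}\simeq_o\mathbb{R}$. I do not expect a genuinely hard step here: the argument is a chain of elementary order-theoretic verifications, and the only points requiring a little care are the bookkeeping of the adjoined top element $\top$ of $B$ when extracting suprema and the countable dense set, and the observation that well-definedness of $\mathbf{C}$ already furnishes the discreteness hypothesis needed to invoke Lemma~\ref{EZaz_Z}.
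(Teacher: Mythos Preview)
Your argument is correct. The approach, however, differs genuinely from the paper's. The paper does not verify the order-theoretic properties of $A$ and $B$ directly; instead it uses a reduction trick: since $\PLPII{\mathbf A}{\mathbf B}\simeq_o\mathbb R$ and $\mathbbm{1}$ is countable, Lemma~\ref{topbotJOLESZ} gives $\PLPI{\mathbb R}{\mathbbm 1}{(\PLPII{\mathbf A}{\mathbf B})}\simeq_o\mathbb R$; Lemma~\ref{Vegyes} rewrites this as $\PLPII{(\PLPI{\mathbb R}{\mathbbm 1}{\mathbf A})}{\mathbf B}\simeq_o\mathbb R$; and then Lemma~\ref{sdhajkdhasgGGG} (with $\mathbf L=\mathbf A$, so $L_{gr}=A$) delivers $\mathbf B\simeq_o\mathbb R$ together with Dedekind completeness of $A$ and the discrete-embedding condition, after which Lemma~\ref{EZaz_Z} concludes $\mathbf A\simeq\mathbb Z$. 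Since all three lemmas are equivalences, the chain also yields sufficiency.

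What each buys: the paper's route is short because the heavy lifting has already been done in Lemmas~\ref{topbotJOLESZ} and~\ref{sdhajkdhasgGGG}; it avoids repeating the Dedekind-completeness, density and countable-dense-subset bookkeeping for a third time. Your route is self-contained and more elementary: it relies only on Lemma~\ref{EZaz_Z} and the standard characterisation of $\mathbb R$, and does not need the somewhat artificial wrapping by $\PLPI{\mathbb R}{\mathbbm 1}{\cdot}$ or the associativity-type Lemma~\ref{Vegyes}. Your explicit sufficiency argument via the order isomorphism $(n,x)\mapsto n+x$ is also more transparent than extracting it from the lemma chain.
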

\begin{proof}
Denote $\mathbbm{1}$ the trivial one-element subalgebra of $\mathbb R$.
By Lemma~\ref{topbotJOLESZ}, $\PLPI{\mathbb R}{\mathbbm 1}{(\PLPII{\mathbf A}{\mathbf B})}\simeq_o\mathbb R$.
Therefore, by Lemma~\ref{Vegyes}, 
$\PLPII{(\PLPI{\mathbb R}{\mathbbm 1}{\mathbf A})}{\mathbf B}\simeq_o\mathbb R$.
By Lemma~\ref{sdhajkdhasgGGG}, $\mathbf B\simeq_o\mathbb R$.
Also by Lemma~\ref{sdhajkdhasgGGG} and by using that $\mathbf A=\mathbf A_\mathbf{gr}$, 
it follows that 
$\mathbf A$ is Dedekind complete and that 
$A$ is discretely embedded into $A$. This latest condition is equivalent to saying that $x_\downarrow<x<x_\uparrow$ \ for $x\in A$.
Thus, by Lemma~\ref{EZaz_Z}, $\mathbf A\simeq\mathbb Z$.
\qed\end{proof}

\begin{lemma}\label{RazR}
For any abelian group $\mathbf G$, $\mathbf G\simeq\mathbb R$ if and only if $\mathbf G\simeq_o\mathbb R$.
\end{lemma}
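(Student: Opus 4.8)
Here $\mathbf G$ is of course understood as a linearly ordered abelian group, equivalently (as recalled in Definition~\ref{nyilak}) a cancellative odd FL$_e$-chain; so $\mathbf G\simeq\mathbb R$ means isomorphic \emph{qua} FL$_e$-algebra, while $\mathbf G\simeq_o\mathbb R$ means the underlying chains are order isomorphic. The forward direction is immediate: any FL$_e$-algebra isomorphism is in particular an order isomorphism, hence $\mathbf G\simeq\mathbb R$ trivially gives $\mathbf G\simeq_o\mathbb R$. All the content lies in the converse, so assume $\mathbf G\simeq_o\mathbb R$; thus the chain $G$ is Dedekind complete, densely ordered, and has neither least nor greatest element.

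First I would show that $\mathbf G$ is archimedean, by exactly the argument opening the proof of Lemma~\ref{EZaz_Z} (the part of that proof which does \emph{not} use the discreteness hypothesis): if $a^{(n)}<b$ held for all $n\in\mathbb N$ with $a$ positive, then $X=\{a^{(n)}\ |\ n\in\mathbb N\}$ is nonempty and bounded above by $b$, so by Dedekind completeness it has a least upper bound $m$; since $a$ is positive, $\nega{a}\te m<m$, so $\nega{a}\te m$ is not an upper bound of $X$, whence $a^{(n+1)}=a\te(\nega a\te a^{(n+1)})>m$ for a suitable $n$, contradicting that $m$ bounds $X$. Hence $\mathbf G$ is archimedean.

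Next I would invoke H\"older's theorem: every archimedean linearly ordered abelian group embeds, \emph{qua} linearly ordered abelian group, into $\mathbb R$; so $\mathbf G$ is isomorphic to a subgroup $\mathbf H\leq\mathbb R$. Being an isomorphism, this map is in particular an order isomorphism, so $\mathbf H\simeq_o\mathbb R$, i.e.\ $H$ is a Dedekind complete, densely ordered subgroup of $\mathbb R$ with no endpoints. Now recall the classification of subgroups of $\mathbb R$: each is either discrete (hence trivial, or order isomorphic to $\mathbb Z$) or dense in $\mathbb R$. The discrete cases are not densely ordered, so they are excluded; and if $H$ were a dense \emph{proper} subgroup of $\mathbb R$, then fixing $r\in\mathbb R\setminus H$, the set $\{h\in H\ |\ h<r\}$ would be nonempty and bounded above in $H$ but have no supremum in $H$ (its only possible supremum in $\mathbb R$ is $r\notin H$), contradicting the Dedekind completeness of $H$. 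Therefore $H=\mathbb R$, so $\mathbf G\simeq\mathbb R$ as linearly ordered abelian groups, and this isomorphism extends to an isomorphism of the associated FL$_e$-algebras.

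I do not expect a genuine obstacle. The only points that need care are (a) checking that the archimedeanness argument is literally the discreteness-free portion of the proof of Lemma~\ref{EZaz_Z}, so that it may simply be cited, and (b) pinning down the two facts about subgroups of $\mathbb R$ actually used, namely that a non-dense subgroup is cyclic and that a proper dense subgroup is never Dedekind complete; both are standard. Everything else is routine bookkeeping.
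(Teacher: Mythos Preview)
Your proof is correct and follows essentially the same route as the paper: prove archimedeanness from Dedekind completeness, invoke H\"older to embed into $\mathbb R$, then use the subgroup dichotomy together with Dedekind completeness to force $H=\mathbb R$. The only variation is in the archimedeanness step: the paper shows the supremum $m=\sup_n a^n$ is a nontrivial idempotent (using that $\te$ distributes over arbitrary joins, from residuation), whereas you use the standard group-theoretic move $\nega a\te m<m\Rightarrow a^{(n+1)}>m$; both are fine, though note that your argument is \emph{not} literally the discreteness-free part of the proof of Lemma~\ref{EZaz_Z} (that proof's contradiction uses $m_\downarrow<m$, i.e.\ the discreteness hypothesis), so it cannot simply be cited --- but since you wrote the argument out this is purely cosmetic.
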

\begin{proof}
Assume $\mathbf G\simeq_o\mathbb R$. Then $\mathbf G$ is archimedean.
Indeed, assume it is not. Then there exist two  elements $a,b>1$ in $\mathbf G$ such that for any $n\in\mathbb N$, $a^n<b$. 
Since $\mathbf G\simeq_o\mathbb R$, $\mathbf G$ is Dedekind complete. Therefore, since $\{a^n\ | \ n\in\mathbb N\}$ is bounded from above by $b$, it has a supremum $m$. We shall prove that $m$ is an idempotent element, and it is different from the unit element $1$, contradicting to $\mathbf G$ being a group.
(i) $m$ is idempotent:
$\g{m}{m}=\g{(\sup_{n\in\mathbb N} a^n)}{(\sup_{n\in\mathbb N} a^n)}$. Since $\mathbf G$ is residuated, $\te$ distributes over arbitrary joins.
Hence, $\g{(\sup_{n\in\mathbb N} a^n)}{(\sup_{n\in\mathbb N} a^n)}=\sup_{n\in\mathbb N}(\g{a^n}{a^n})=\sup_{n\in\mathbb N}a^{2n}=m$.
(ii) $m>1$: Since $a^0=1$ it is sufficient to prove that $a^n$ is strictly increasing, which holds true; adapt here the related proof of Lemma~\ref{EZaz_Z}.
Therefore, $\mathbf G$ is archimedean. By the H\" older theorem $\mathbf G$ embeds into the additive group of reals.
Now, as it is well-known, a subgroup of the additive group of reals is either isomorphic to the the additive group of integers, or it is a dense subset of the reals. 
The $\mathbf G\simeq_o\mathbb R$ condition leaves only the latter case, and Dedekind completeness implies that the universe of $\mathbf G$ is equal to the set of real numbers.
\qed\end{proof}

\begin{definition}\label{basicPL}\rm{\bf (Basic group-like uninorms)}
Let $\mathbb U_0=\mathbb R$ and for $n\in\mathbb N$
let
$\mathbb U_{n+1}=
\PLPII{\mathbb Z}{\mathbb U_n}$. 
Proposition~\ref{AB-C=A-BC} yields that it can equivalently be written without brackets as
$$
\mathbb U_n=
\PLPII{\underbrace{\PLPII{\mathbb Z}{\PLPII{\ldots}{\mathbb Z}}}_n}{\mathbb R}
.$$
\vskip-0.3cm
\begin{figure}
\begin{center}
  \includegraphics[width=0.44\textwidth]{Symm_Prod.pdf} \ \  
  \includegraphics[width=0.43\textwidth]{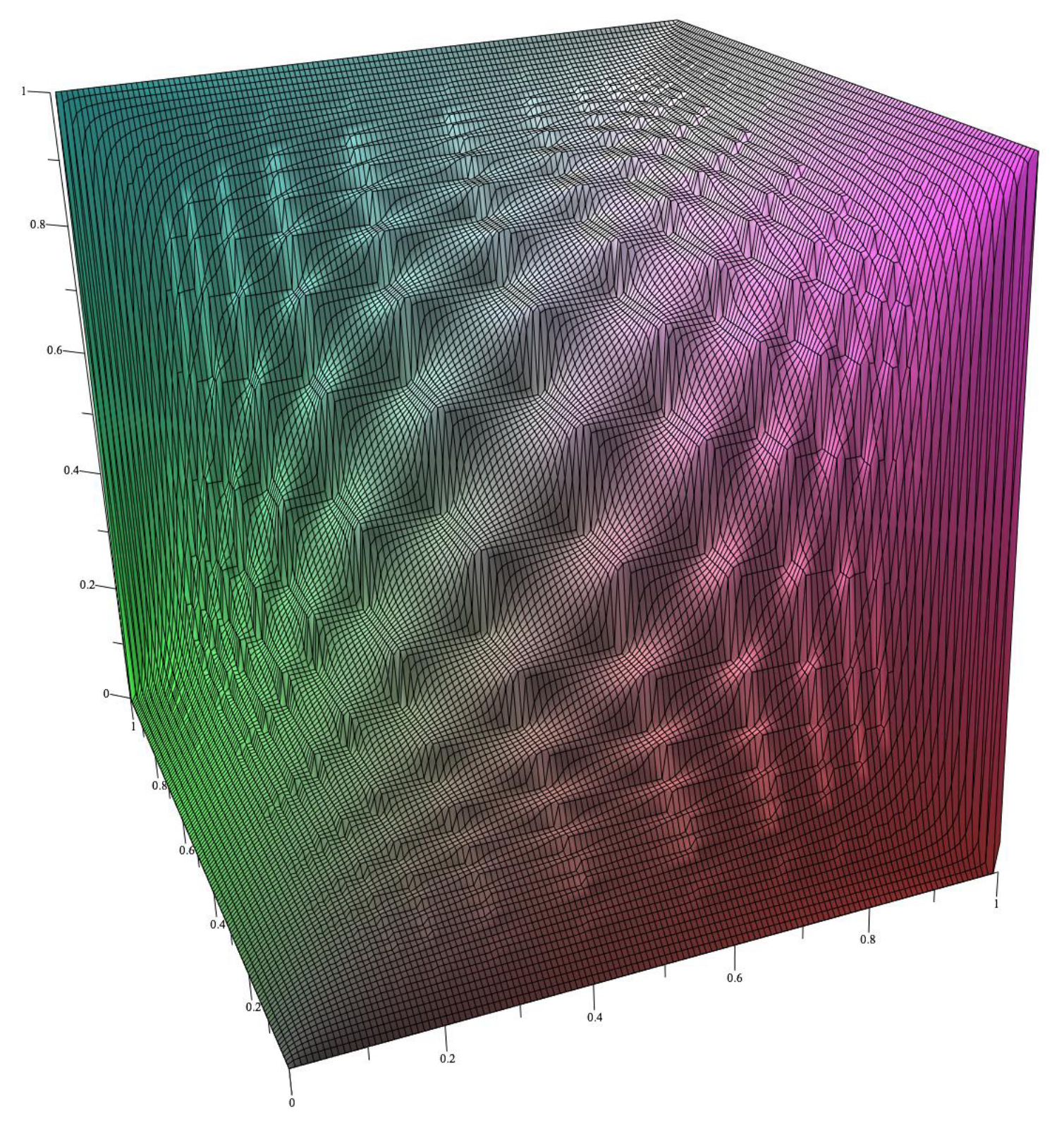} \ \ 
\caption{Visualization: Two basic group-like uninorms, 
$\mathbb U_0=\mathbb R$ and 
$\mathbb U_1=\PLPII{\mathbb Z}{\mathbb R}$
shrank into $]0,1[$.
One can describe $\mathbb U_1$ as infinitely many $\mathbb U_0$ components.
Immagine $\mathbb U_2$ in the same way: as infinitely many $\mathbb U_1$ components, etc.
}
\label{fig:2}
\end{center}
\end{figure}

\begin{figure}
\begin{center}
  \includegraphics[width=0.43\textwidth]{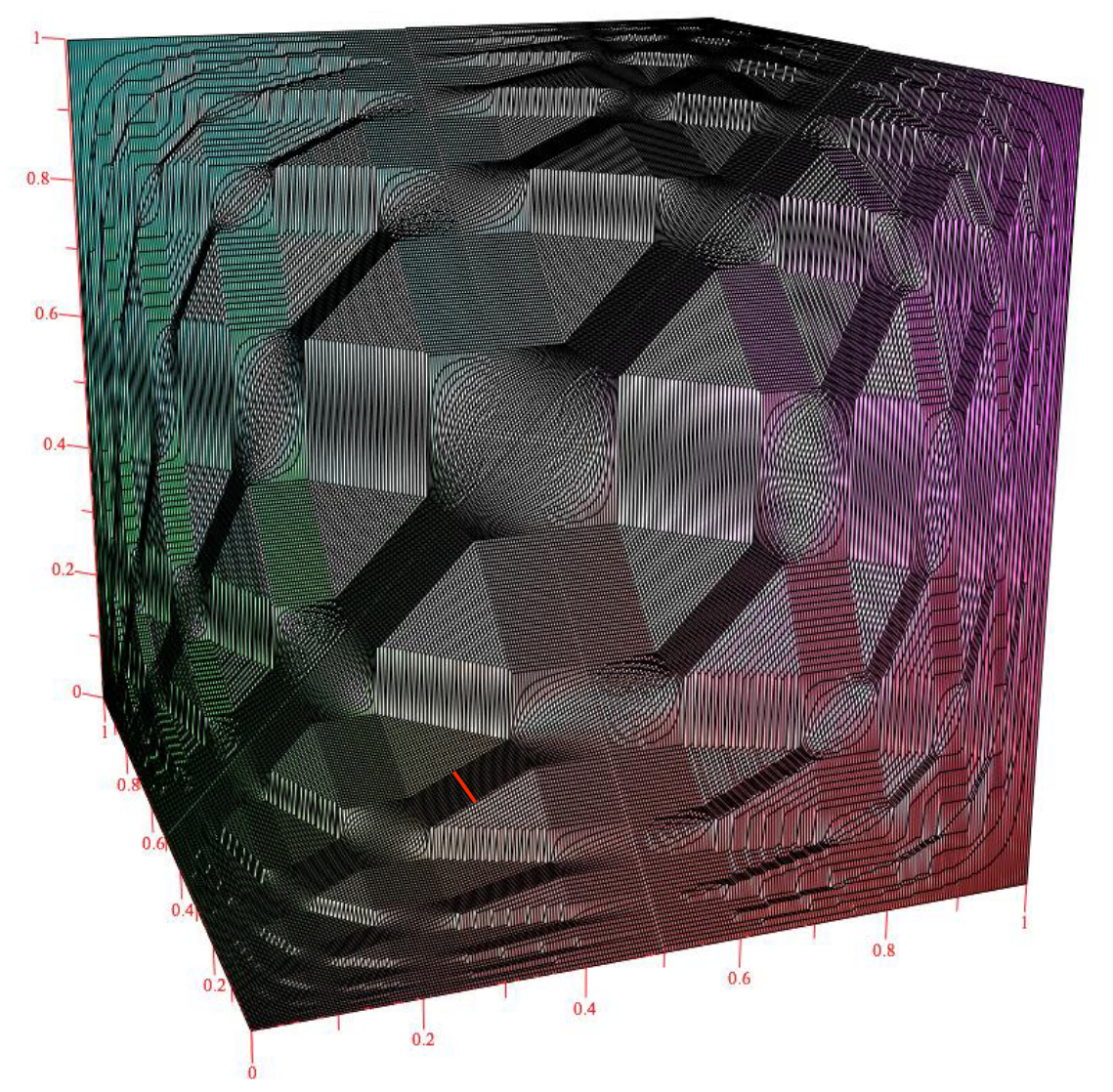} \ \ 
\caption{Visualization: An example for the first type extension,
$\PLPI{\mathbb R}{\mathbb Z}{\mathbb R}$ shrank into $]0,1[$
}
\label{fig:3}
\end{center}
\end{figure}
\end{definition}

\begin{lemma}\label{ZZZR}
Let $n\in\mathbb N$, $n\geq 1$. For $i=1,\ldots, n$, let $\mathbf G_i$ be linearly ordered abelian groups.
If 
$\mathbf U=\PLPII{\PLPII{\mathbf G_1}{\mathbf G_2}}{\PLPII{\ldots}{\mathbf G_n}}\simeq_o\mathbb R$ then 
$\mathbf G_i\simeq\mathbb Z$ holds for $1\leq i\leq n-1$, $\mathbf G_n\simeq\mathbb R$, and thus 
$\mathbf U\simeq \mathbb U_{n-1}$.
\end{lemma}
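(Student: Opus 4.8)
The plan is to argue by induction on $n$, at each step peeling off the leftmost factor $\mathbf G_1$ and using Lemma~\ref{AlappalIZO} as the main workhorse. For the base case $n=1$ we simply have $\mathbf U=\mathbf G_1\simeq_o\mathbb R$, so Lemma~\ref{RazR} gives $\mathbf G_1\simeq\mathbb R=\mathbb U_0$ qua FL$_e$-algebras, and there is nothing further to check.

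For the inductive step assume $n\geq 2$ and that the statement holds for every shorter iterated type~II product. By repeated application of Lemma~\ref{AB-C=A-BC} the algebra $\mathbf U$ is isomorphic to $\PLPII{\mathbf G_1}{\mathbf V}$, where $\mathbf V$ is the iterated type~II partial lexicographic product of the $n-1$ linearly ordered abelian groups $\mathbf G_2,\ldots,\mathbf G_n$; moreover $\mathbf V$ is well-defined because $\mathbf U$ is (again by Lemma~\ref{AB-C=A-BC}), and it is an odd FL$_e$-chain by Theorem~\ref{SubLexiTheo}, since linearly ordered abelian groups are odd FL$_e$-chains and the type~II construction preserves oddness. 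Now Lemma~\ref{AlappalIZO}, applied to the linearly ordered abelian group $\mathbf G_1$ and the odd FL$_e$-chain $\mathbf V$, turns $\PLPII{\mathbf G_1}{\mathbf V}\simeq_o\mathbb R$ into the two conclusions $\mathbf G_1\simeq\mathbb Z$ and $\mathbf V\simeq_o\mathbb R$.

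Since $\mathbf V$ is an iterated type~II partial lexicographic product of the $n-1$ linearly ordered abelian groups $\mathbf G_2,\ldots,\mathbf G_n$ and $\mathbf V\simeq_o\mathbb R$, the induction hypothesis applies and yields $\mathbf G_i\simeq\mathbb Z$ for $2\leq i\leq n-1$, $\mathbf G_n\simeq\mathbb R$, and $\mathbf V\simeq\mathbb U_{n-2}$. Together with $\mathbf G_1\simeq\mathbb Z$ this gives $\mathbf G_i\simeq\mathbb Z$ for $1\leq i\leq n-1$ and $\mathbf G_n\simeq\mathbb R$. Finally, the type~II partial lexicographic product respects FL$_e$-algebra isomorphisms in each argument (immediate from its coordinatewise definition), so $\mathbf U\simeq\PLPII{\mathbf G_1}{\mathbf V}\simeq\PLPII{\mathbb Z}{\mathbb U_{n-2}}=\mathbb U_{n-1}$ by Definition~\ref{basicPL}, which closes the induction.

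The only genuinely delicate point — and the step I would double-check most carefully — is the bookkeeping required to invoke Lemma~\ref{AB-C=A-BC} and Lemma~\ref{AlappalIZO}: one must ensure that all the intermediate partial lexicographic products occurring in the rebracketing are well-defined (which follows from well-definedness of $\mathbf U$ via Lemma~\ref{AB-C=A-BC}), and that the second argument passed to Lemma~\ref{AlappalIZO} is truly an odd FL$_e$-chain and not merely an involutive FL$_e$-chain, which is exactly where Theorem~\ref{SubLexiTheo} is needed. Everything else is a routine unwinding of the definitions.
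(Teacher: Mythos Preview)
Your proof is correct and follows essentially the same route as the paper: induction on $n$, with Lemma~\ref{RazR} handling the base case and Lemma~\ref{AlappalIZO} applied to $\PLPII{\mathbf G_1}{(\PLPII{\mathbf G_2}{\PLPII{\ldots}{\mathbf G_n}})}$ driving the inductive step. If anything, you are more explicit than the paper about the bookkeeping (invoking Lemma~\ref{AB-C=A-BC} for the rebracketing and Theorem~\ref{SubLexiTheo} to ensure the inner factor is an odd FL$_e$-chain), and you fold the paper's separate $n=2$ case into the general inductive step, which is harmless.
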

\begin{proof}
Induction on $n$.
If $n=1$ then $\mathbf G_1$ is a linearly ordered abelian group and $\mathbf G_1\simeq_o\mathbb R$. By Lemma~\ref{RazR}, $\mathbf G_1\simeq\mathbb R$.
The case $n=2$ is concluded by Lemmas~\ref{AlappalIZO} and \ref{RazR}.
Assume the statement holds for $k-1$. 
An application of Lemma~\ref{AlappalIZO} to 
$\PLPII{\mathbf G_1}{(\PLPII{\mathbf G_2}{\PLPII{\ldots}{\mathbf G_k}})}$
yields that 
(qua FL$_e$-algebras)
$\mathbf G_1\simeq\mathbb Z$ and $\PLPII{\mathbf G_2}{\PLPII{\ldots}{\mathbf G_k}}\simeq\mathbb R_o$.
By the induction hypothesis, for $2\leq i\leq k-1$, $\mathbf G_i\simeq\mathbb Z$ and $\mathbf G_n\simeq \mathbb R$, and we are done.
\end{proof}

We are ready to prove the main theorem:
Theorem~\ref{GrLikeUni} is a representation theorem for those group-like uninorms which has finitely many idempotent elements, by means of basic group-like uninorms and the type I partial-lexicographic product construction.
Alternatively, one may view Theorem~\ref{GrLikeUni} as a representation theorem for those group-like uninorms which has finitely many idempotent elements, by means of $\mathbb Z$ and $\mathbb R$ and the type I and type II partial-lexicographic product constructions.

\begin{theorem}\label{GrLikeUni}{{\bf (Representation by basic group-like uninorms)}} 
If $\mathbf U$ is a group-like uninorm, which has finitely many idempotent elements, out of which there are $m\in\mathbf N$, $m\geq 1$ idempotents in its negative cone then there exists a sequence $k\in  \mathbb N^{\{1,\ldots,m\}}$
such that
$\mathbf U\simeq\mathbf U_m$, where
for $i\in\{1,\ldots,m\}$,
$$
\mathbf U_i
=
\left\{
\begin{array}{ll}
\mathbb U_{k_1}  & \mbox{ if $i=1$}\\
\PLPI{\mathbf U_{i-1}}{\mathbf H_{i-1}}{\mathbb U_{k_i}}		 & \mbox{ if $2\leq i\leq m$}
\end{array}
\right.
,
$$
where for $2<i\leq m$, $\mathbf H_{i-1}$ is a countable subgroup of $(\mathbf U_{i-1})_\mathbf{gr}$.
\end{theorem}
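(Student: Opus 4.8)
The plan is to feed $\mathbf U$ into the Hahn-type decomposition of Theorem~\ref{Hahn_type}, observe that for a $\simeq_o\mathbb R$ algebra the decomposition can use only type I and type II products, re-associate it into the claimed normal form by means of Lemmas~\ref{AB-C=A-BC} and~\ref{Vegyes}, and finally identify the resulting blocks via Lemmas~\ref{topbotJOLESZ}, \ref{ZZZR}, \ref{RazR} and count idempotents. In detail, apply Theorem~\ref{Hahn_type} to $\mathbf U$, obtaining linearly ordered abelian groups $\mathbf G_1,\dots,\mathbf G_n$ and the tower $\mathbf X_i$ of~(\ref{EzABeszed}) with $\mathbf X_n\simeq\mathbf U$. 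First I would show that every step of this tower is genuinely of type I (that is, $\mathbf W_{i-1}=\mathbf V_{i-1}$) or genuinely of type II (that is, $\mathbf V_{i-1}=(\mathbf X_{i-1})_\mathbf{gr}$). Indeed, if not, pick the \emph{largest} index $j$ at which a genuine type III or a genuine type IV step occurs; by Proposition~\ref{TUTTItype12}(1)--(2) the algebra $\mathbf X_j$ then has a gap outside its group part, and since all subsequent steps are of type I or type II, repeated use of Proposition~\ref{TUTTItype12}(3)--(4) shows that $\mathbf U\simeq\mathbf X_n$ has a gap outside its group part, contradicting the fact that $\mathbf U\simeq_o\mathbb R$ is densely ordered. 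Hence from now on every $\mathbf X_i$ equals $\PLPI{\mathbf X_{i-1}}{\mathbf H_{i-1}}{\mathbf G_i}$ for some $\mathbf H_{i-1}\leq(\mathbf X_{i-1})_\mathbf{gr}$, or $\PLPII{\mathbf X_{i-1}}{\mathbf G_i}$.

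Next I would bring the tower into normal form by induction on the number of type I steps, showing $\mathbf X_n\simeq\mathbf Z_p$, where $p-1$ is the number of type I steps and
$$\mathbf Z_1=\mathbf B_1,\qquad \mathbf Z_j=\PLPI{\mathbf Z_{j-1}}{\mathbf H_{j-1}}{\mathbf B_j}\quad (2\leq j\leq p),$$
each $\mathbf B_j$ being an iterated type II product of a consecutive block of the $\mathbf G_i$'s. If there is no type I step, the whole tower collapses to a single such $\mathbf B_1$ by repeated use of Lemma~\ref{AB-C=A-BC}. Otherwise let $j$ be the largest index of a type I step; all later steps are type II, so Lemma~\ref{AB-C=A-BC} gives $\mathbf X_n\simeq\PLPII{(\PLPI{\mathbf X_{j-1}}{\mathbf H_{j-1}}{\mathbf G_j})}{\mathbf B}$ with $\mathbf B$ an iterated type II product of $\mathbf G_{j+1},\dots,\mathbf G_n$, and Lemma~\ref{Vegyes} rewrites this as $\PLPI{\mathbf X_{j-1}}{\mathbf H_{j-1}}{(\PLPII{\mathbf G_j}{\mathbf B})}$; taking $\mathbf B_p:=\PLPII{\mathbf G_j}{\mathbf B}$, which is again an iterated type II product of groups, and applying the induction hypothesis to $\mathbf X_{j-1}$, which is built from $\mathbf G_1,\dots,\mathbf G_{j-1}$ with one fewer type I step, proves the claim. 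Well-definedness is automatic throughout, since Lemmas~\ref{AB-C=A-BC} and~\ref{Vegyes} transfer it and the constraint $\mathbf H_{j-1}\leq(\mathbf X_{j-1})_\mathbf{gr}$ survives the re-association.

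To conclude I would peel the type I factors off from the outside. Since $\mathbf Z_p\simeq_o\mathbb R$, Lemma~\ref{topbotJOLESZ} applied to $\mathbf Z_p=\PLPI{\mathbf Z_{p-1}}{\mathbf H_{p-1}}{\mathbf B_p}$ gives $\mathbf B_p\simeq_o\mathbb R$, $\mathbf H_{p-1}$ countable, and $\mathbf Z_{p-1}\simeq_o\mathbb R$; iterating down the backbone, every $\mathbf B_j\simeq_o\mathbb R$ and every $\mathbf H_j$ is countable. As each $\mathbf B_j$ is an iterated type II product of linearly ordered abelian groups that is $\simeq_o\mathbb R$, Lemma~\ref{ZZZR} (whose single-group case is Lemma~\ref{RazR}) yields $\mathbf B_j\simeq\mathbb U_{k_j}$ for some $k_j\in\mathbb N$; substituting and transporting the $\mathbf H_j$'s along these isomorphisms (which keeps them countable subgroups of the relevant group part) shows $\mathbf U\simeq\mathbf U_p$ of exactly the asserted shape. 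It remains to see $p=m$: an easy induction shows $\mathbb U_k=\PLPII{\mathbb Z}{\mathbb U_{k-1}}$ has exactly one idempotent, the unit, in its negative cone, and from the explicit universe of a type I product, together with the fact that any strictly negative idempotent of $\mathbf U_{i-1}$ lies outside $(\mathbf U_{i-1})_\mathbf{gr}$ and hence can be paired only with $\bot$, one computes that $\PLPI{\mathbf U_{i-1}}{\mathbf H_{i-1}}{\mathbb U_{k_i}}$ has exactly one more negative-cone idempotent than $\mathbf U_{i-1}$; hence $\mathbf U_p$ has $p$ such idempotents, forcing $p=m$.

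The step I expect to be the main obstacle is the first reduction: the crux is that density rules out any genuine type III or type IV step, which is precisely what Proposition~\ref{TUTTItype12} delivers. After that the argument is a fiddly but essentially routine normal-form manipulation, the two points most easily botched being that the re-associations in the second step must be checked not to destroy well-definedness, and that in the last step a block $\mathbf B_j$ is recognized to be $\simeq_o\mathbb R$ not on its own but only after the outer type I factors have been removed by Lemma~\ref{topbotJOLESZ}.
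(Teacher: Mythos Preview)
Your proposal is correct and follows essentially the same route as the paper: apply Theorem~\ref{Hahn_type}, use Proposition~\ref{TUTTItype12} to rule out genuine type~III/IV steps (your argument via the largest offending index and propagation of gaps is just a more explicit version of the paper's one-line appeal to density), and then do induction on the number of type~I steps, re-associating via Lemmas~\ref{AB-C=A-BC} and~\ref{Vegyes} and identifying the blocks via Lemmas~\ref{topbotJOLESZ}, \ref{RazR}, \ref{ZZZR}. Your organization differs only cosmetically---you first put the whole tower into normal form and then identify all blocks at once, whereas the paper interleaves these two steps---and you add the verification $p=m$ by counting negative-cone idempotents, a point the paper leaves implicit.
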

\begin{proof}
Consider the group representation of $\mathbf U$ according to Theorem~\ref{Hahn_type}:
Since $\mathbf X_n\simeq\mathbf U\simeq_o\mathbb R$, $\mathbf X_n$ cannot have any gaps, and
by Proposition~\ref{TUTTItype12} it follows that 
all extensions in its group representation are either of type I or type II.
More formally, 
there exist linearly ordered abelian groups $\mathbf G_i$ $(i\in\{1,2,\ldots,n\})$, 
$\mathbf V_1\leq \mathbf G_1$, $\mathbf V_i\leq\mathbf V_{i-1}\lex\mathbf G_i$ $(i\in\{2,\ldots,n-1\})$,
and a binary sequence $\iota\in \{I,II\}^{\{2,\ldots,n\}}$
such that 
$\mathbf U\simeq\mathbf X_n$, where
$\mathbf X_1:=\mathbf G_1$ and for $i\in\{2,\ldots,n\}$,
\begin{equation}\label{EzABeszed}
\mathbf X_i
=
\left\{
\begin{array}{ll}
\PLPI{\mathbf X_{i-1}}{\mathbf V_{i-1}}{\mathbf G_i}	& \mbox{ if $\iota_i=I$}\\
\PLPII{\mathbf X_{i-1}}{\mathbf G_i} 	& \mbox{ if $\iota_i=II$}\\
\end{array}
\right. .
\end{equation}
Induction on $l$, the number of type I extensions in the group representation.

If $l=0$ then by Lemma~\ref{AB-C=A-BC} the brackets can be omitted, thus
$\mathbf U\simeq\PLPII{\PLPII{\mathbf G_1}{\mathbf G_2}}{\PLPII{\ldots}{\mathbf G_n}}$.
By denoting $k_1=n-1$, Lemma~\ref{ZZZR} confirms  $\mathbf U\simeq \mathbb U_{k_1}$.

Let $l\geq 1$ and assume that the statement holds for $l-1$, and that $\mathbf U$ has $l$ type I extensions in its group representation.
There are two cases:

If $\iota_n=I$ then 
$
\mathbf U\simeq
\PLPI{\mathbf X_{n-1}}{\mathbf H_{n-1}}{\mathbf G_n}
$
and by Lemma~\ref{topbotJOLESZ}, 
$\mathbf X_{n-1}\simeq_o\mathbb R$, $\mathbf G_n\simeq_o\mathbb R$, and $\mathbf H_{n-1}$ is countable.
By Lemma~\ref{RazR}, $\mathbf G_n\simeq\mathbb R=\mathbb U_0$. Applying the induction hypothesis to $\mathbf X_{n-1}$ concludes the proof.

If $\iota_n=II$ then 
let $j=\max\{i\in\{1,\ldots,n\}\ | \  \iota_i=I \}$.
Note that this set in nonempty, since $l\geq1$, that is, there is at least one type I extension in the group representation.
Then
$$
\mathbf U\simeq\left(\PLPII{\PLPII{\ldots\left(\PLPII{\left(\PLPI{\mathbf X_{j-1}}{\mathbf H_{j-1}}{\mathbf G_j}\right)}{\mathbf G_{j+1}}\right)}{\ldots}}{\mathbf G_n}\right).
$$
By Lemma~\ref{AB-C=A-BC} it is isomorphic to
$$
\PLPII{\left(\PLPI{\mathbf X_{j-1}}{\mathbf H_{j-1}}{\mathbf G_j}\right)}{\left(\PLPII{\mathbf G_{j+1}}{\PLPII{\ldots}{\mathbf G_n}}\right)} ,
$$
and by Lemma~\ref{Vegyes} it is isomorphic to 
$$
\PLPI{\mathbf X_{j-1}}{\mathbf H_{j-1}}{\left(\PLPII{\mathbf G_j}{\PLPII{\ldots}{\mathbf G_n}}\right)}
$$
Applying Lemma~\ref{topbotJOLESZ} it follows that 
$\mathbf X_{j-1}\simeq_o\mathbb R$ and $\PLPII{\mathbf G_j}{\PLPII{\ldots}{\mathbf G_n}}\simeq_o\mathbb R$, and $\mathbf H_{j-1}$ is countable.
Thus, by Lemma~\ref{ZZZR} it follows that $\PLPII{\mathbf G_j}{\PLPII{\ldots}{\mathbf G_n}}\simeq\mathbb U_{n-j}$, and the induction hypothesis applied to $\mathbf X_{j-1}$ ends the proof.
\qed\end{proof}



\bibliography{easychair}
\end{document}